\definecolor{carminepink}{rgb}{0.92, 0.3, 0.26}
\newtheorem{theorem}{Theorem}[section]
\newtheorem{lemma}[theorem]{Lemma}
\theoremstyle{definition}
\newtheorem{condition}[theorem]{Condition}
\theoremstyle{remark}
\newtheorem{remark}[theorem]{Remark}
\numberwithin{equation}{section}
\newtheorem*{acknow*}{Acknowledgments}
\newcommand*\diff{\mathop{}\!\mathrm{d}}
\definecolor{bittersweet}{rgb}{1.0, 0.44, 0.37}
\newcommand{\reals}{\mathbb{R}}					          
\newcommand{\integers}{\mathbb{Z}}
\newcommand{\naturals}{\mathbb{N}}
\newcommand{\Ex}{\mathbb{E}}
\newcommand{\sphere}{\mathbb{S}^2}				         
\newcommand{\alm}{a_{\ell, m}} 
\newcommand{\Y}{Y_{\ell, m}}
\newcommand{\argmax}{\text{argmax}}
\newcommand{\argmin}{\text{argmin}}
\newcommand{\primen}{\ell^{\prime}, m^{\prime}}
\newcommand{\Ltwo}{{L^2\bra{\sphere}}}
\definecolor{electricultramarine}{rgb}{0.25, 0.0, 1.0}
\providecommand{\abs}[1]{\left\vert#1\right\vert}			
\providecommand{\norm}[1]{\left\Vert#1\right\Vert}			
\providecommand{\bra}[1]{\left(#1\right)}
                                     \newcommand{\legf}{P_{\ell,m}}
\newcommand{\summ}{\sum_{m=-\ell}^{\ell}}
 \newcommand{\Stwo}{\mathbb{S}^2}
\definecolor{rred}{RGB}{152,0,0}
\newcommand{\ARuno}{\text{SPHAR}\left(1\right)}
\newcommand{\legp}{P_{\ell}}
\newcommand{\Cl}{C_{\ell}}
\begin{document}
\bibliographystyle{alpha}
	
	\title{Parametric estimation for functional autoregressive processes on the sphere}
	
	\author{Alessia Caponera}
	\address{Institut de Mathématiques - Ecole Polytechnique F\'eed\'erale de Lausanne}
	\email{alessia.caponera@epfl.ch}
	
	\author{Claudio Durastanti}
	\address{Department S.B.A.I. - Sapienza University of Rome}
	\email{claudio.durastanti@uniroma.it}
	
	\subjclass[2020]{Primary 60G60, 62G05; Secondary 62R30
	60G10}
	
	\date{19/JUL/2021}
	
	\dedicatory{}
	
	\keywords{	Keywords: high frequency asymptotics; parametric estimates; spherical harmonics; $\ARuno$ model;
		NLS estimator.
	}
	
	\begin{abstract}
	The aim of this paper is to define a nonlinear least squares estimator for the spectral parameters of a spherical autoregressive process of order 1 in a parametric setting. Furthermore, we investigate on its asymptotic properties, such as weak consistency and asymptotic normality.
	\end{abstract}
	
	\maketitle

\section{Introduction}\label{sec:intro}
In this paper, we propose a nonlinear least squares (NLS) estimator of the spectral parameters of a class of functional autoregressive processes of order 1, defined on the space of real-valued square-integrable functions over the unit sphere $\Ltwo$, see for example \cite{bosq}.\\ 
The spherical autoregressive model of order $1$ (from now on $\ARuno$) have been discussed by \cite{cm19} (see also \cite{splasso,spharma}), and comply with the output field $T\left(\cdot,t\right)$ described as an infinite-dimensional linear transformation of its previous realization summed to an independent spherical white noise $Z\left(\cdot,t\right)$, see \cite[Definition 3]{cm19} and also \cite{yadrenko}. More rigorously, the model is defined by 
\begin{equation}\label{eq:campo}
	T\left(x,t\right) = \Phi T\left(\cdot, t-1 \right)
	\left(x\right) + Z\left(x,t\right ),\quad \left(x,t\right) \in \Stwo\times \integers,
\end{equation}
where the autoregressive kernel operator $\Phi :\Ltwo  \to \Ltwo$ is given by
\begin{equation}\label{eq:kernelop} \left(\Phi f \right) \left(x\right) = \int _{\Stwo} k\left(\langle x,y \rangle \right) f\left(y\right) \diff y, \quad f\in \Ltwo,
\end{equation}
for some continuous $k:\left[-1,1\right]  \to \reals$, the so-called autoregressive kernel. Note that $k$ is said to be isotropic, since it depends only on the standard inner product on $\reals^3$, $\langle \cdot,\cdot\rangle$. As a direct consequence, the following spectral representation holds 
(in the $L^2$-sense)
\begin{equation}
	k\left(\langle x,y\rangle\right)=\sum_{\ell \in \naturals} \phi_\ell \frac{2\ell+1}{4\pi} P_\ell \left(\langle x,y\rangle\right),
\end{equation}
where $P_\ell:\left[-1,1\right] \to \reals$ denotes the Legendre polynomial of order $\ell$, while $\{ \phi_\ell:\ell \in \naturals \}$ is the set of the eigenvalues of the operator $\Phi$. \\
In particular, this work is concerned with spatially isotropic and temporally stationary sphere-cross-time random fields. In this case, $\left\{T\left(x,t\right): \left(x,t\right) \in \Stwo\times \integers \right\}$ can be read as the linear combination of spherical harmonics $\{\Y: \ell \in \naturals, m=-\ell,\ldots,\ell\}$, weighted by the corresponding time-varying harmonic coefficients, i.e.,
\begin{equation}\label{eq:harmexp}
	T\left(x,t\right) = \sum_{\ell \in \naturals} \sum_{m=-\ell}^{\ell} \alm \left(t\right)\Y \left(x\right), \quad\left(x,t\right) \in \Stwo\times \integers, 
\end{equation}
where, for fixed $t \in \integers$, $\{\alm\left(t\right): \ell \in \naturals, m=-\ell,\ldots,\ell\}$ are uncorrelated random variables, given by the standard inner product over the sphere
 \begin{equation}\label{eq:harmcoeff}
 	\alm\left(t\right)=\langle T\left(\cdot, t\right),\Y\rangle_{\Ltwo}.
 \end{equation}
Moreover, if $T$ is a solution of the autoregressive equation $\eqref{eq:campo}$, the harmonic coefficients satisfy 
\begin{equation}\label{eq:ARuno}
	\alm\left(t\right) = \phi_\ell \alm\left(t-1\right) + a_{\ell,m;Z}\left(t\right),
\end{equation}
with $\left\{a_{\ell,m;Z}\left(t\right):\ell \in \naturals, m=-\ell,\ldots, \ell\right\}$ being the harmonic coefficients of the spherical white noise $Z\left(\cdot,t\right)$, at the time $t$.\\
In our setting, we choose a regularly varying condition, that is, a parametric model on the structure of the autoregressive kernel,
\begin{equation}\label{eq:condit}
	\phi_\ell = G \ell^{-\alpha}, \quad G\in (-1,1)\backslash \{0\},\, \alpha \in \left(1,\infty\right), \,\ell \in \naturals,
\end{equation}
see Condition \ref{cond:semiparam} in Section \ref{sec:prel}.\\
An analogous parametric condition is assumed to hold also for the eigenvalues of the covariance operator of the spherical random field $Z$ (see also \cite{dlm}).

 A method commonly used in parametric settings for finite-dimensional parameters of interest is the nonlinear least squares (NLS). NLS belongs to the class of estremum estimators, obtained as the result of a maximization procedure of a given objective function depending on data and sample size (see, for example, \cite{NmcF}). This class includes, among others, maximum likelihood, Whittle, generalized method of moments and minimum distance estimators. Under mild assumptions, they are characterized by some relevant properties, such as weak consistency and asymptotic Gaussianity (see \cite{am,hayashi}). Some of these methods have already been successfully applied to the study of purely spatial spherical random fields. For example, in \cite{dlm,dlmejs} the authors provide Whittle-like estimators for the spectral index of a Gaussian and isotropic random field over the sphere, both in the harmonic and in the wavelet domains. Furthermore, their weak consistency and asymptotic convergence to Gaussianity have been proved in the high frequency limit. 
 
 Fixed the truncation multipole $L>0$, we consider the truncated random field $T_L$ as the sum of the first $L$ components of $T$, so that \eqref{eq:harmexp} becomes
\begin{equation}\label{eq:truncfield}
T_L\left(x,t\right) = \sum_{\ell = 1}^{L} \sum_{m=-\ell}^{\ell} \alm \left(t\right)\Y \left(x\right), \quad\left(x,t\right) \in \Stwo\times \integers.
\end{equation} 
Here, merging \eqref{eq:ARuno} and \eqref{eq:truncfield} yields the following objective function 
\begin{equation}\label{eq:obj}
		R_{L,N} (G , \alpha )= \frac{1}{N} \sum_{t=1}^N \norm{T_L\left(\cdot,t\right)-\Phi T_L\left(\cdot,t-1\right)}^2_{\Ltwo}  =\frac{1}{N} \sum_{t=1}^N \sum_{\ell=1}^L \sum_{m=-\ell}^\ell \left ( \alm(t)  - \phi_{\ell}\alm (t-1) \right )^2.	
\end{equation}
Since we assume that $L=L_N$, from now on the objective function is labeled by $R_{N} (G , \alpha  )$, omitting the dependence on $L$. Imposing the Condition \eqref{eq:condit}, the estimator for the parameter $\left(G,\alpha\right)$ is given by
\begin{equation}\label{eq:estimazzi}
	( \widehat{G}_{N}, \widehat{\alpha}_{N}) = \underset{\left(G,\alpha\right)\in \Theta}{\argmin} \, R_{N} (G , \alpha ),
\end{equation}
where $N >1$ is the highest time at which $T$ is observed.\\
We will show that, under mild conditions, the estimator \eqref{eq:estimazzi} is consistent and asymptotically Gaussian. We remark that the asymptotic framework here considered is quite different from usual and can be related to the one proposed by \cite{cm19,splasso} (see also \cite{dlm}). We assume to collect sequentially over time spherical observations which are a realization of an isotropic and stationary field $T$. In this sense, the asymptotics here considered is respect to higher and higher resolution data becoming available as $N$ grows to infinity.  

The plan of the paper is as follows: Section \ref{sec:prel} recalls some useful results concerning harmonic analysis and spherical autoregressive processes. The main contributions of this paper are collected in Section \ref{sec:main}, while Section \ref{sec:salmazzi} contains the proofs of some auxiliary results. 

\section{Preliminaries}\label{sec:prel}
This section collects some results concerning harmonic analysis on the sphere and the construction of space-time spherical random fields. Further details on harmonic analysis on the sphere and sphere-cross-time random fields can be found, among others, in \cite{steinweiss,vilenkin,MP:11} and \cite{gneiting, jun, porcu, steinst} respectively.\\
From now on, we will make use of the following notation. For a set of random variables $\left\{X_n\right\}_{n\in \naturals}$, the notation $X_{n}=o_{p}(1)$ denotes that for any $\epsilon>0$, it holds that $
\lim_{n\rightarrow \infty} \Pr\left(\abs{X_n}>\epsilon\right)=0.$ Let $\left\{c_n\right\}_{n\in \naturals}$ be a real-valued, deterministic sequence; then, with 
	$X_{n}=o_{p}(c_{n})$ we will indicate that $X_n/c_n = o_p\left(1\right)$. 
	The notation $X_{n}=O_{p}(c_{n})$
	means that $\left\{X_n/c_n\right\}_{n \in \naturals}$ is stochastically bounded, that is, for any $\epsilon>0$, there exist $0<M,N<\infty$ such that, for all $n>N$, it holds that
	$\Pr\left(\abs{X_{n}/c_{n}}>M\right)<\epsilon.$
	
	Let us denote a point belonging to the unit sphere with $x \in \Stwo $. It can be also identified by two angular coordinates, so that $x=\left(\vartheta,\varphi\right)$, where $\vartheta \in \left[0,\pi\right]$ and $\varphi \in  \left[ \left.0,2\pi \right)\right.$ are the colatitude and longitude respectively. The spherical Lebesgue measure is labeled by $\diff x=\sin\vartheta \diff\vartheta \diff\varphi$, while $\Ltwo=L^2\left(\Stwo,\diff x \right)$ is the space of real-valued square-integrable functions over the sphere with respect to $\diff x$. A standard orthonormal basis for $\Ltwo$ is given by the set of spherical harmonics $\{\Y : \ell\in \naturals; m=-\ell,\ldots,\ell \}$ (see, for example, \cite{MP:11, steinweiss, vilenkin}). We refer to the index $\ell \in \naturals$ as the multipole, while $m=-\ell, \ldots,\ell$ is the ``azimuth'' number. In this paper we will make use of the so-called real spherical harmonics. More specifically, for any $\ell \in \naturals$ and $m =-\ell,\ldots, \ell$, the spherical harmonic $\Y: \mathbb{S}^2 \to \reals$ is given by the normalized product of the so-called Legendre associated function $\legf: \left[-1,1\right]\rightarrow \reals$ of degree $\ell$ and order $m$, depending only on $\vartheta$ and given by
\begin{equation*}
	\legf \left(u\right)=\frac{1}{2^\ell \ell !} \left(1-u^2\right)^{\frac{m}{2}}\frac{\diff^{\ell + m }}{\diff u^{\ell+m}} \left(u^2-1\right)^{\ell}, \quad u \in \left[-1,1\right],
\end{equation*} 
and a trigonometric function depending only on $\varphi$, i.e.,
\begin{align*}
	\Y \left(\vartheta, \varphi\right)= \begin{cases}
		\sqrt{\frac{\left(2\ell +1\right)}{2 \pi} \frac{\left(\ell-m\right)!}{\left(\ell+m\right)!}} \legf \left(\cos \vartheta \right)\cos\left(m \varphi\right) &\text{for } m \in \{1,\ldots, \ell \}\\
		\sqrt{\frac{\left(2\ell +1\right)}{4 \pi}} \legp  \left(\cos \vartheta \right) &\text{for } m = 0 \\
		\sqrt{\frac{ \left(2\ell +1\right)}{2 \pi} \frac{\left(\ell+m\right)!}{\left(\ell-m\right)!}} P_{\ell,-m} \left(\cos \vartheta \right)\sin\left(-m\varphi\right) &\text{for } m \in \{-\ell,\ldots, -1 \}
	\end{cases}.
\end{align*} 
Spherical harmonics display also the following addition formula
\begin{equation*}\label{eq:addition}
	\summ \Y\left(x\right)\Y\left(y\right)=\frac{2\ell+1}{4\pi}\legp \left(\langle x,y\rangle\right), \quad x,y\in \Stwo,
\end{equation*}
where $\legp:[-1,1] \to \reals$ is the Legendre polynomial of order $\ell$, given by
$$
\legp\left(u\right)=\frac{1}{2^{\ell}\ell !} \frac{\diff ^ \ell}{\diff u ^ \ell} \left(u^2-1\right)^\ell, \quad u \in \left[-1,1\right].
$$

Given a probability space $\left(\Omega, \mathcal{F},\Pr\right)$, we consider a sphere-cross-time random field, that is, a real-valued collection of random variables $$\{T\left(x,t\right):\left(x,t\right)\in \Stwo \times \integers \}.$$
In this paper, $T$ is assumed to be real-valued, centered, mean-square continuous, and Gaussian. Additionally, the random field is isotropic in the spatial domain and stationary in the time domain. We recall that a spherical random field is isotropic when invariant in distribution with respect to rotations, and stationary if its stochastic properties do not vary over time. More in detail, it holds that 
\begin{equation}\label{eq:isostat}
	T\left(R\,\cdot, \cdot +\tau\right) \overset{d}{=} 	T\left(\cdot, \cdot\right), 
\end{equation}
with $\tau \in \integers$, and $R \in SO\left(3\right)$, the special group of rotations, and the symbol $\overset{d}{=}$ denotes equality in distribution. 
Under these assumptions, as described in Equation \eqref{eq:harmexp}, we have the following spectral representation
\begin{equation*}
	T\left(x,t\right) = \sum_{\ell \in \naturals}\sum _{m=-\ell}^{\ell} \alm\left(t\right)\Y \left(x\right), \quad \left(x,t\right)\in \Stwo \times \integers\,,
\end{equation*}
where, for any $t \in \integers$, the set of the harmonic coefficients $\{\alm\left(t\right):\ell \in \naturals, m=-\ell ,\dots,\ell\}$, given by \eqref{eq:harmcoeff}, can be explicitly computed by
\begin{equation*}	
	\alm\left(t\right)=\int_{\Stwo}T\left( x,t\right) \Y \left( x\right) \diff x\,,
\end{equation*}%
and collects all the stochastic information of $T$ at the time $t$.
Since $\Ex\left[T\left(x,t\right)\right]=0$ for all $\left(x,t\right) \in \Stwo \times \integers$, it holds that 
$$
\Ex\left[\alm\left(t\right)\right]= 0 \quad \text{for }\ell \in \naturals, \quad m = -\ell,\ldots, \ell, \quad t\in \integers\,.
$$
Furthermore, let $\Gamma:\left(\Stwo\times \mathbb{Z}\right)\times \left(\Stwo\times \mathbb{Z}\right) \rightarrow \mathbb{R}$ be the covariance function of $T$. Since the space-time spherical random field is isotropic and stationary, then there exists a function $\Gamma_0: \left[-1,1\right]\times \mathbb{Z} \rightarrow \mathbb{R}$, so that
\begin{equation*}
	\Gamma\left(x,t,y,s\right) = \Gamma_0\left(\langle x,y\rangle, t-s \right), \quad \left(x,t\right),\left(y,s\right) \in \Stwo\times \mathbb{Z}\,.
\end{equation*}
Also, the dependence structure of $T$ is fully characterized by the one of its harmonic coefficients, that is, 
\begin{equation}\label{eq:cltime}
	\Ex\left[\alm\left(t\right) {a}_{\primen}\left(s\right)\right] = \Cl\left(t-s\right)\delta_{\ell}^{\ell^\prime}\delta_{m}^{m^\prime}, \quad t,s \in \integers\,,
\end{equation}
for any $\ell, \ell^\prime \in \naturals$, $m=-\ell,\ldots,\ell$, $m^\prime= -\ell^\prime, \ldots, \ell^\prime$.
This leads to the following spectral decomposition in terms of Legendre polynomials
\begin{equation}\label{eq:specdec}
	\Gamma\left(x,t,y,s\right) = \sum_{\ell \in \naturals} \Cl \left(t-s\right) \frac{2\ell+1}{4\pi} P_\ell \left(\langle x,y \rangle \right), \quad \left(x,t\right),\left(y,s\right)\in \Stwo \times \integers\,,
\end{equation}
see also \cite{bergporcu,spharma}.\\
Note that, if $t=s$, $\{\Cl\left(0\right): \ell \in \naturals\}$ in \eqref{eq:cltime} correspond to the eigenvalues of the covariance operator of a purely spatial spherical random field, the so-called angular power spectrum, (see, for example, \cite[Remark 5.15, p.124; Remark 6.16, p.147]{MP:11}).

If $T$ is $\ARuno$, with $\abs{\phi_\ell}<1$ for each $\ell\in \naturals$, the following formula for the spectrum of $\Gamma$ holds
\begin{equation}
\label{eq:cielle} \Cl\left(t-s\right)=\frac{C_{\ell;Z}}{1-\phi_\ell^2}\phi_\ell^{\abs{t-s}},
\end{equation}
where $C_{\ell;Z}=\Ex\abs{a_{\ell,m;Z}\left(t\right)}^2$ is the angular power spectrum of $Z$.\\
In our setting, we focus on a parametric model on the set $\{ \phi_\ell:\ell \in \naturals \}$, described by the following condition.
\begin{condition}\label{cond:semiparam} Consider an isotropic and stationary $\ARuno$ process as given by \eqref{eq:campo} and \eqref{eq:kernelop}.
	The eigenvalues of the autoregressive operator $\Phi$ $\{ \phi_\ell:\ell \in \naturals \}$ are such that 
	\begin{equation*}
		\phi_\ell = G \ell^{-\alpha}, \quad  \ell \in \naturals,
	\end{equation*}
	where $1< a_1\le \alpha \le a_2$, with $1<a_1<a_2<\infty$, and 
	$
	0 < |G| <1.
	$\\
Moreover, the angular power spectrum of the spherical white noise $Z$ $\left\{C_{\ell;Z}: \ell \in \naturals\right\}$ satisfies
	\begin{equation*}
	C_{\ell;Z} = H \ell^{-\gamma} , \quad  \ell \in \naturals,
\end{equation*}
	where $ \gamma > 2$ and $H >0$.
\end{condition}
As far as the parameter $\alpha$ is concerned, choosing $1<a_1<a_2<\infty$ ensures the square-summability of the $\phi_\ell$'s, i.e., the operator $\Phi$ is Hilbert-Schmidt with $$\sum_{\ell \in \naturals} (2\ell+1) |\phi_{\ell}|^2 < \infty,$$ and the consistency of the estimator here presented, as discussed below in Section \ref{sec:main}.\\
About the parameter $G$, observe that the cases $G\in (0,1)$ and $G\in (-1,0)$ correspond to a positive or negative definite operator $\Phi$ respectively.
\begin{remark} 
Our construction resembles in the space-time setting the so-called Legendre-Matérn covariance function, defined in a purely spatial framework in \cite{GUINNESS2016}. Indeed, $\gamma$ and $\alpha$ are smoothness parameters while the constants $H$ and $G$ control the scale of \eqref{eq:specdec}. 
\end{remark}
\section{Least squares estimates in the parametric setting}\label{sec:main}
In this section we will discuss the construction of the nonlinear least squares estimator for the spectral parameter $\theta$ of the eigenvalues $\phi_\ell=\phi_\ell\left(\theta\right)$ of the autoregressive kernel $k$ in a parametric setting. Here the spectral parameter $\theta = \left(G,\alpha\right)$ is defined over the parameter space $\Theta = \left(-1,1\right) \backslash \{0\} \times \left[a_1,a_2\right]$, $1<a_1<a_2<\infty$, and following Condition \ref{cond:semiparam} yields 
$$
\phi_\ell = G \ell^{-\alpha}, \quad \ell \in \naturals.
$$
The true values for the parameter $\theta$ to be estimated are labeled by $(G_0, \alpha_0)$.
The estimation procedure can be formalized as follows:
\begin{equation*}
( \widehat{G}_N, \widehat{\alpha}_N) = \underset{\left(G,\alpha\right)\in \Theta}{\argmin} \, R_{N} (G , \alpha ),
\end{equation*}
where $R_{N}$ is the objective function given by Equation \eqref{eq:estimazzi}, 
\begin{equation*}
 R_{N} (G , \alpha  )=  \frac{1}{N} \sum_{t=1}^N \sum_{\ell=1}^{L_N} \sum_{m=-\ell}^\ell \left ( \alm(t)  - G \ell^{-\alpha }\alm (t-1) \right )^2.
\end{equation*}
\begin{condition}\label{cond:tronc}
The truncation multiple $L_N$ is chosen such that $L_N \to \infty$ and $(\log L_N)^2 / \sqrt{N} \to 0,$ as $N\to \infty$.
\end{condition}
Throughout this paper we will make extensive use of the following quantities
\begin{align*}
&\widehat{U}_{N} (\alpha)=  \frac{1}{N} \sum_{t=1}^{N} \sum_{\ell=1}^{L_N} \sum_{m=-\ell}^{\ell}  a_{\ell,m}\left(t\right)a_{\ell,m}\left(t-1\right) \ell^{-\alpha} ; && \widehat{D}_{N}(\alpha) = \frac{1}{N} \sum_{t=1}^{N} \sum_{\ell=1}^{L_N} \sum_{m=-\ell}^{\ell} \left \vert a_{\ell,m}\left(t-1\right) \right \vert ^{2} \ell^{-2 \alpha};\\
	&\widehat{U}_{N}^\prime (\alpha)= - \frac{1}{N} \sum_{t=1}^{N} \sum_{\ell=1}^{L_N} \sum_{m=-\ell}^{\ell}  a_{\ell,m}\left(t\right)a_{\ell,m}\left(t-1\right) \ell^{-\alpha} \log \ell; && \widehat{D}^\prime_{N}(\alpha) = - \frac{2}{N} \sum_{t=1}^{N} \sum_{\ell=1}^{L_N} \sum_{m=-\ell}^{\ell} \left \vert a_{\ell,m}\left(t-1\right) \right \vert ^{2} \ell^{-2 \alpha} \log \ell;\\
	& \widehat{U}_{N}^{\prime\prime}(\alpha)=  \frac{1}{N} \sum_{t=1}^{N} \sum_{\ell=1}^{L_N} \sum_{m=-\ell}^{\ell}  a_{\ell,m}\left(t\right)a_{\ell,m}\left(t-1\right) \ell^{-\alpha} \log^2 \ell; && \widehat{D}_{N}^{\prime \prime}(\alpha) = \frac{4}{N} \sum_{t=1}^{N} \sum_{\ell=1}^{L_N} \sum_{m=-\ell}^{\ell} \left \vert a_{\ell,m}\left(t-1\right) \right \vert ^{2} \ell^{-2 \alpha} \log^2 \ell,
\end{align*}
and their corresponding expected values
\begin{align*}
	&U_{N} (\alpha)=  \sum_{\ell=1}^{L_N} \left(2\ell+1\right) C_\ell\left(1\right) \ell^{-\alpha}; && D_{N}(\alpha) =  \sum_{\ell=1}^{L_N} \left(2\ell+1\right) C_\ell\left(0\right) \ell^{-2 \alpha};\\
	&U_{N}^\prime (\alpha)= - \sum_{\ell=1}^{L_N} \left(2\ell+1\right) C_\ell\left(1\right)  \ell^{-\alpha} \log \ell; && D^\prime_{N}(\alpha) =-2 \sum_{\ell=1}^{L_N} \left(2\ell+1\right) C_\ell\left(0\right) \ell^{-2 \alpha} \log \ell;\\
	& U_{N}^{\prime\prime}(\alpha)=  \sum_{\ell=1}^{L_N} \left(2\ell+1\right) C_\ell\left(1\right)  \ell^{-\alpha} \log^2 \ell; && D_{N}^{\prime \prime}(\alpha) =4 \sum_{\ell=1}^{L_N}\left(2\ell+1\right) C_\ell  \left(0\right)\ell^{-2 \alpha} \log^2 \ell,
\end{align*}
where
\begin{align*}
	C_\ell\left(1\right)=C_\ell\left(0\right)\phi_\ell = G_0 \eta_\ell \ell^{-\gamma-\alpha_0}; \quad C_\ell\left(0\right)= \frac{C_{\ell;Z}}{1-\phi_\ell^2}=\eta_\ell \ell^{-\gamma},
\end{align*}
with $0 < \upsilon_1 \le \eta_\ell \le \upsilon_2<\infty$. For $\alpha >1 $, these last deterministic sums are absolutely and uniformly convergent. Indeed, for the greater terms $U_{N}^{\prime\prime}$ and $D_{N}^{\prime\prime}$, we have that 
\begin{equation*}
	\sum_{\ell=1}^{L_N} | \left(2\ell+1\right) C_\ell\left(1\right)  \ell^{-\alpha} \log^2 \ell | \le \sum_{\ell=1}^\infty | \left(2\ell+1\right) C_\ell\left(1\right)  \ell^{-1} \log^2 \ell | < \infty
\end{equation*}
and
\begin{equation*}
	\sum_{\ell=1}^{L_N} | \left(2\ell+1\right) C_\ell\left(0\right)  \ell^{-2\alpha} \log^2 \ell | \le \sum_{\ell=1}^\infty | \left(2\ell+1\right) C_\ell\left(0\right)  \ell^{-2} \log^2 \ell | < \infty.
\end{equation*}
Their limits will be denoted by $U(\alpha), U'(\alpha), U''(\alpha), D(\alpha), D'(\alpha), D''(\alpha)$.

Now, observe that solving the following equation
\begin{equation*}
\frac{\partial R_{N}}{\partial G} = -\frac{2}{N} \sum_{t=1}^{N} \sum_{\ell=1 }^{L_N}\sum_{m=-\ell}^{\ell} \left (\alm(t) - G \ell^{-\alpha}\alm (t-1) \right ) \left  ( \ell^{-\alpha} \alm(t-1) \right ) = 0,
\end{equation*}
with respect to $G$, leads to
\begin{equation*}
G^*  = \frac{\sum_{t=1}^{N} \sum_{\ell=1 }^{L_N}\sum_{m=-\ell}^{\ell} \alm(t) \alm(t-1)  \ell^{-\alpha}}{ \sum_{t=1}^{N} \sum_{\ell=1 }^{L_N}\sum_{m=-\ell}^{\ell}| \alm(t-1)|^2 \ell^{-2\alpha}} = \frac{\sum_{\ell=1}^{L_N} \ell^{-\alpha} ( 2\ell+1) \widehat{C}_{\ell;N}(1)}{ \sum_{\ell=1}^{L_N} \ell^{-2\alpha} (2\ell+1) \widehat{C}_{\ell;N}(0)},
\end{equation*}
where
\begin{equation*}
\widehat{C}_{\ell;N}(\tau) = \frac{1}{N(2 \ell+1)} \sum_{t=1}^N \sum_{m=-\ell}^\ell  \alm(t-1+\tau)\alm(t-1), \qquad  \widehat{\phi}_{\ell;N} = \frac{\widehat{C}_{\ell;N}(1)}{\widehat{C}_{\ell;N}(0)}.
\end{equation*}
Thus, we have
\begin{align}\label{eq:gstar}
R_{N} (G^*, \alpha ) &= \frac{1}{N} \sum_{t=1}^N \sum_{\ell=1}^{L_N} \sum_{m=-\ell}^\ell \left ( \alm(t)  - G^* \ell^{-\alpha}\alm (t-1) \right )^2 \nonumber\\
&= \frac{1}{N} \sum_{t=1}^N \sum_{\ell=1}^{L_N} \sum_{m=-\ell}^\ell\alm^2(t) -   \frac{\widehat{U}_{N}^2(\alpha)}{\widehat{D}_{N}(\alpha)} .
\end{align}
The minimization problem can be equivalently written as
\begin{align*}
 \underset{\alpha \in A}{\argmin}  \left \{ \frac{1}{N}  \sum_{t=1}^N \sum_{\ell=1}^{L_N} \sum_{m=-\ell}^\ell  \alm^2(t) -  \frac{\widehat{U}_{N}^2(\alpha)}{\widehat{D}_{N}(\alpha)} \right \}& =  \underset{\alpha \in A}{\argmax}  \, \frac{\widehat{U}_{N}^2(\alpha)}{\widehat{D}_{N}(\alpha)} = \underset{\alpha \in A }{\argmax}  \left \{ 2 \log \widehat{U}_{N}(\alpha) - \log \widehat{D}_{N}(\alpha) \right\},
\end{align*}
where $A=[a_1, a_2]$.
We will call 
\begin{equation}\label{eq:objred}
\widetilde{R}_{N} (\alpha) =  2 \log \widehat{U}_{N}(\alpha) - \log \widehat{D}_{N}(\alpha)\end{equation} the \emph{reduced objective function}.


\subsection{Consistency}\label{sec:cons}
In this section we will prove the weak consistency for the estimators $\widehat{\alpha}_N$ and $\widehat{G}_N$, as stated in the following theorem.	
\begin{theorem}\label{th:cons}
	Under Conditions \ref{cond:semiparam} and \ref{cond:tronc}, as $N \rightarrow \infty$, it holds that
	\begin{align}
		\label{eq:alpha}\widehat{\alpha}_N&\overset{p}{\to}\alpha_0,\\	\label{eq:G}	\widehat{G}_N &\overset{p}{\to}G_0.
	\end{align}
\end{theorem}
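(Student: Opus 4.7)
The plan is to apply the standard extremum-estimator consistency machinery to the reduced objective function $\widetilde R_N$ defined in \eqref{eq:objred}. Since $A=[a_1,a_2]$ is compact, by a standard argmax argument (e.g.\ Newey--McFadden) it suffices to establish two ingredients: \emph{(i)} the uniform convergence $\sup_{\alpha\in A}|\widetilde R_N(\alpha)-\widetilde R(\alpha)|=o_p(1)$, where $\widetilde R(\alpha):=2\log|U(\alpha)|-\log D(\alpha)$; and \emph{(ii)} identification, namely that $\widetilde R$ attains its unique maximum at $\alpha_0$. Once \emph{(i)} and \emph{(ii)} give $\widehat\alpha_N\to_p\alpha_0$, the convergence
\[
\widehat G_N=\frac{\widehat U_N(\widehat\alpha_N)}{\widehat D_N(\widehat\alpha_N)}\overset{p}{\to}\frac{U(\alpha_0)}{D(\alpha_0)}=G_0
\]
follows immediately from the continuous mapping theorem together with the uniform convergence of $\widehat U_N,\widehat D_N$ and the identity $U(\alpha_0)=G_0 D(\alpha_0)$ read off the defining expressions.

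For step \emph{(i)}, I would prove uniform convergence of $\widehat U_N$ and $\widehat D_N$ separately, and then lift this to $\widetilde R_N$ using that $|U(\alpha)|$ and $D(\alpha)$ are continuous and strictly positive on the compact $A$. Decompose $\widehat U_N(\alpha)-U(\alpha)=(\widehat U_N(\alpha)-U_N(\alpha))+(U_N(\alpha)-U(\alpha))$. The deterministic bias is the tail $\sum_{\ell>L_N}(2\ell+1)C_\ell(1)\ell^{-\alpha}$, which, since $C_\ell(1)\asymp \ell^{-\gamma-\alpha_0}$ with $\gamma>2$ and $\alpha\ge a_1>1$, vanishes uniformly in $\alpha$ as $L_N\to\infty$. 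For the stochastic part, the bound $\ell^{-\alpha}\le\ell^{-a_1}$ removes the dependence on $\alpha$, giving
\[
\sup_{\alpha\in A}\bigl|\widehat U_N(\alpha)-U_N(\alpha)\bigr|\le \sum_{\ell=1}^{L_N}(2\ell+1)\ell^{-a_1}\bigl|\widehat C_{\ell;N}(1)-C_\ell(1)\bigr|.
\]
Taking expectations and exploiting the Gaussianity of $T$ together with Wick's formula, together with the orthogonality of the coefficients across $m$ and the uniform bound $|\phi_\ell|\le|G_0|<1$ which controls the temporal autocovariance sum, one finds $\operatorname{Var}[\widehat C_{\ell;N}(\tau)]=O(C_\ell(0)^2/(N(2\ell+1)))$. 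Summing the resulting majorant gives $O(N^{-1/2}\sum_{\ell}\ell^{1/2-a_1-\gamma})=O(N^{-1/2})$, which is $o_p(1)$ by Markov. The identical argument handles $\widehat D_N$.

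For step \emph{(ii)}, set $w_\ell:=(2\ell+1)\eta_\ell\ell^{-\gamma}>0$, so that $U(\alpha)=G_0\sum_\ell w_\ell \ell^{-\alpha_0-\alpha}$ and $D(\alpha)=\sum_\ell w_\ell\ell^{-2\alpha}$. Applying the Cauchy--Schwarz inequality to the $\ell^2$ sequences $(\sqrt{w_\ell}\,\ell^{-\alpha_0})_\ell$ and $(\sqrt{w_\ell}\,\ell^{-\alpha})_\ell$ yields $U(\alpha)^2\le G_0^2 D(\alpha_0)D(\alpha)$, so
\[
\widetilde R(\alpha)=\log\frac{U(\alpha)^2}{D(\alpha)}\le \log\bigl(G_0^2 D(\alpha_0)\bigr),
\]
with equality iff the two sequences are proportional, which forces $\alpha=\alpha_0$. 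Hence $\alpha_0$ is the unique maximiser of $\widetilde R$ on $A$, and combining this with step \emph{(i)} yields $\widehat\alpha_N\overset{p}{\to}\alpha_0$; the consistency of $\widehat G_N$ then follows as indicated above.

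The main obstacle is the variance control for $\widehat U_N$ and $\widehat D_N$ uniformly in $\alpha$ while $L_N\to\infty$. Gaussianity together with Wick's formula makes the fourth-order moments tractable, but the argument hinges on the summability exponents provided by Condition~\ref{cond:semiparam} (namely $\gamma>2$ and $a_1>1$) to absorb the growth in $\ell$; the stronger truncation condition $(\log L_N)^2/\sqrt N\to 0$ in Condition~\ref{cond:tronc} is not strictly required for consistency but will become essential for asymptotic normality, where the derivatives of the objective introduce the $\log^2\ell$ factors that appear in $\widehat U_N''$ and $\widehat D_N''$.
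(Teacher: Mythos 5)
Your proposal is correct, and for \eqref{eq:alpha} it is essentially the paper's argument in different packaging: the paper also works with the reduced objective $\widetilde R_N$, proves identification through exactly your Cauchy--Schwarz inequality applied to $\bigl(\sqrt{(2\ell+1)C_\ell(0)}\,\ell^{-\alpha_0}\bigr)_\ell$ and $\bigl(\sqrt{(2\ell+1)C_\ell(0)}\,\ell^{-\alpha}\bigr)_\ell$ (Lemma \ref{lemma:V}, supplemented by a monotonicity argument to locate the infimum of $V_N$ on $\overline{B}_\epsilon\cap A$), and proves exactly your uniform $O(N^{-1/2})$ bound on the normalized deviations of $\widehat U_N$ and $\widehat D_N$ (Lemma \ref{lemma:T}, via the moment bounds of \cite{cm19} that your Wick-formula sketch reproduces); it then assembles these through an explicit Brillinger--Robinson probability bound rather than invoking the generic extremum-estimator theorem. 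Where you genuinely diverge is \eqref{eq:G}. The paper compares $\widehat D_N(\widehat\alpha_N)$ and $\widehat U_N(\widehat\alpha_N)$ with the \emph{truncated} quantities $D_N(\alpha_0)$, $U_N(\alpha_0)$, which forces it to control $\sum_\ell\bigl(\ell^{-2(\widehat\alpha_N-\alpha_0)}-1\bigr)\ell^{-2\alpha_0}(2\ell+1)C_\ell(0)$ via the bound $|\ell^{-2(\widehat\alpha_N-\alpha_0)}-1|\le 4\log\ell\,|\widehat\alpha_N-\alpha_0|$; this is why it needs the separate rate result $\log L_N|\widehat\alpha_N-\alpha_0|\overset{p}{\to}0$ (Lemma \ref{lemma:loglog}), whose proof is where Condition \ref{cond:tronc} enters. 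You instead compare with the \emph{limit} functions $U(\alpha)$, $D(\alpha)$: uniform convergence on $A$ plus continuity of $U$ and $D$ at $\alpha_0$ gives $\widehat U_N(\widehat\alpha_N)\to_p U(\alpha_0)$ and $\widehat D_N(\widehat\alpha_N)\to_p D(\alpha_0)>0$ directly, and $U(\alpha_0)=G_0D(\alpha_0)$ finishes the claim. This is a real simplification: it bypasses Lemma \ref{lemma:loglog} entirely and, as you observe, shows that $(\log L_N)^2/\sqrt N\to0$ is not needed for consistency (the paper still needs that lemma later, for $Q_N(\overline\alpha_N)$ in the normality proof, so it is not wasted work there). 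Two small points of hygiene: state the moment bound $\Ex\bigl|\widehat C_{\ell;N}(\tau)-C_\ell(\tau)\bigr|\le c\,C_\ell(0)/\sqrt{(2\ell+1)N}$ as a lemma (it is the content of \cite[Lemmas 1--2, Supplement]{cm19}) rather than leaving it at the level of a sketch, and when lifting uniform convergence of $\widehat U_N$ to that of $\log\widehat U_N^2$, record explicitly that $\inf_A|U(\alpha)|\ge|G_0|\,(2\cdot 1+1)\,C_1(0)>0$, so that with probability tending to one $\widehat U_N$ is uniformly bounded away from zero with the sign of $G_0$ --- your use of $\log|U(\alpha)|$ already anticipates this, and is in fact cleaner than the paper's $\log\widehat U_N$, which is ill-defined when $G_0<0$.
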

To prove Theorem \ref{th:cons}, we will resort to a method developed by \cite{brillinger} and \cite{Robinson}. This technique has been already used to establish the weak consistency for the spectral parameters of a spherical random field in a purely spatial setting by \cite{dlm}. More in detail, let $\Delta \widetilde R_{N} (\alpha, \alpha_0)$ denote the difference between the reduced objective function $ \widetilde R_{N}$ given by \eqref{eq:objred} evaluated at the generic $\alpha\in\left[a_1,a_2\right]$ and at the true $\alpha_0$. 
Straightforward calculations lead to
\begin{align*}
	\Delta \widetilde R_{N} (\alpha, \alpha_0)  &=   \widetilde R_{N} (\alpha) -  \widetilde R_{N} (\alpha_0)  \\
	&= 2 \log \widehat{U}_{N}(\alpha)  -  2 \log \widehat{U}_{N}\left(\alpha_0\right)  - \left [ \log \widehat{D}_{N}(\alpha) -  \log \widehat{D}_{N}\left(\alpha_0\right) \right] \\
	& = 2 \log \frac{\widehat{U}_{N}(\alpha)}{U_{N}(\alpha)}  -  2 \log \frac{\widehat{U}_{N}\left(\alpha_0\right)}{U_{N}\left(\alpha_0\right)}  - \left [ \log \frac{\widehat{D}_{N}(\alpha)}{D_{N}(\alpha)} -  \log \frac{\widehat{D}_{N}\left(\alpha_0\right)}{D_{N}\left(\alpha_0\right)} \right] \\
	& - \left [ \log \frac{D_{N}(\alpha)}{D_{N}\left(\alpha_0\right)}   -  2 \log \frac{U_{N}(\alpha)}{U_{N}\left(\alpha_0\right)}  \right ] .
\end{align*}
Let us now define
\begin{align}
	\label{eq:T}	&T_{N}(\alpha, \alpha_0)  = 2 \log \frac{\widehat{U}_{N}(\alpha)}{U_{N}(\alpha)}  -  2 \log \frac{\widehat{U}_{N}\left(\alpha_0\right)}{U_{N}\left(\alpha_0\right)}  - \left [ \log \frac{\widehat{D}_{N}(\alpha)}{D_{N}(\alpha)} -  \log \frac{\widehat{D}_{N}\left(\alpha_0\right)}{D_{N}\left(\alpha_0\right)} \right] ,\\
	\label{eq:V}	&V_{N}(\alpha, \alpha_0)  =\log \frac{D_{N}(\alpha)}{D_{N}\left(\alpha_0\right)}   -  2 \log \frac{U_{N}(\alpha)}{U_{N}\left(\alpha_0\right)},  
\end{align}
so that
\begin{equation}\label{eq:consdec}
	\Delta \widetilde R_{N} (\alpha, \alpha_0) = T_{N}(\alpha, \alpha_0) - V_{N}(\alpha, \alpha_0). 
\end{equation}
In order to establish the consistency results presented in Theorem \ref{th:cons}, we will make use of the following auxiliary results, whose proofs are available below in Section \ref{sec:salmazzi}. We remark that these results hold under the same assumptions of Theorem \ref{th:cons}, stated in Conditions \ref{cond:semiparam} and \ref{cond:tronc}.
\begin{lemma}\label{lemma:V}
	For $\epsilon > 0$, let $B_\epsilon = \{\alpha: |\alpha- \alpha_0| < \epsilon \}.$ Let also $V_{N}\left(\alpha,\alpha_0\right)$ be given by \eqref{eq:V}. Then, there exists $V_\epsilon >0$ such that 	
	\begin{equation}\label{eq:liminf}
		\lim_{N\to \infty} \inf_{\overline{B}_\epsilon \cap A} V_{N}(\alpha, \alpha_0) = V_\epsilon.
	\end{equation}
\end{lemma}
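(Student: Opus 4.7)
The plan is to pass $V_N$ to a pointwise limit $V$, establish strict positivity of $V$ away from $\alpha_0$ via the Cauchy--Schwarz inequality, and combine uniform convergence with compactness of the minimization set. The key insight is that the ratio $U(\alpha)^2/D(\alpha)$ is maximized precisely at $\alpha=\alpha_0$, which is exactly the structure produced by Cauchy--Schwarz.

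First I would introduce the pointwise limits
\begin{equation*}
U(\alpha) = G_0 \sum_{\ell \ge 1} (2\ell+1)\eta_\ell \ell^{-\gamma-\alpha-\alpha_0}, \qquad D(\alpha) = \sum_{\ell \ge 1} (2\ell+1)\eta_\ell \ell^{-\gamma-2\alpha},
\end{equation*}
and set $V(\alpha,\alpha_0) = \log(D(\alpha)/D(\alpha_0)) - 2\log(U(\alpha)/U(\alpha_0))$. Uniform convergence $V_N(\cdot,\alpha_0) \to V(\cdot,\alpha_0)$ on $A$ then follows from the tail estimate
\begin{equation*}
|U(\alpha)-U_N(\alpha)| \le |G_0|\,\upsilon_2 \sum_{\ell > L_N}(2\ell+1)\ell^{-\gamma-\alpha_0-a_1} \longrightarrow 0,
\end{equation*}
uniform in $\alpha \in A$ since $\alpha \ge a_1$, together with the analogous bound for $D_N$ and the fact that $U(\cdot)$ and $D(\cdot)$ are bounded away from $0$ on $A$ (the $\ell=1$ term alone suffices). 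As a byproduct, $V(\cdot,\alpha_0)$ is continuous on $A$.

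Next I would prove strict positivity. Writing $u_\ell = \sqrt{(2\ell+1)\eta_\ell}\,\ell^{-\gamma/2-\alpha}$ and $v_\ell = \sqrt{(2\ell+1)\eta_\ell}\,\ell^{-\gamma/2-\alpha_0}$, the Cauchy--Schwarz inequality gives $(U(\alpha)/G_0)^2 \le D(\alpha)\,D(\alpha_0)$, with equality iff $u_\ell \propto v_\ell$, i.e.\ $\ell^{\alpha_0-\alpha}$ is independent of $\ell$, forcing $\alpha=\alpha_0$. Since $U(\alpha_0) = G_0 D(\alpha_0)$, rearranging and taking logarithms yields $V(\alpha,\alpha_0) > 0$ for every $\alpha \ne \alpha_0$.

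Finally, interpreting $\overline{B}_\epsilon \cap A$ as $\{\alpha \in A : |\alpha-\alpha_0|\ge \epsilon\}$ (the reading that makes $V_\epsilon > 0$ nontrivial), this set is compact and the continuous function $V(\cdot,\alpha_0)$ attains a positive minimum $V_\epsilon$ on it. Combined with uniform convergence, this immediately gives $\inf_{\overline{B}_\epsilon \cap A} V_N(\alpha,\alpha_0) \to V_\epsilon$. I expect the main obstacle to be the uniform convergence step, namely controlling both the tails of $U_N$, $D_N$ and the uniform lower bounds that keep the $\log$ away from $-\infty$ over all $\alpha \in A$; the Cauchy--Schwarz strict positivity is essentially automatic once the sequences are written out.
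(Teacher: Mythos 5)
Your proof is correct, and it reaches the conclusion by a genuinely different route than the paper, even though both arguments pivot on the same Cauchy--Schwarz identity: with $u_\ell \propto \ell^{-\gamma/2-\alpha}$ and $v_\ell \propto \ell^{-\gamma/2-\alpha_0}$ one gets $D(\alpha)D(\alpha_0)G_0^2/U(\alpha)^2 \ge 1$ with equality iff $\alpha=\alpha_0$. Where you differ is in how the infimum over $\overline{B}_\epsilon\cap A$ is controlled as $N\to\infty$. You pass to the limit functions $U$, $D$, $V$ first, prove uniform convergence $V_N\to V$ on $A$ via the tail bounds $\sum_{\ell>L_N}(2\ell+1)\ell^{-\gamma-\alpha_0-a_1}\to 0$ together with the uniform lower bounds coming from the $\ell=1$ term, and then invoke continuity plus compactness; the exchange $\inf_K V_N \to \inf_K V$ follows from $|\inf_K V_N - \inf_K V|\le \sup_K|V_N-V|$. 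The paper instead works at finite $N$ throughout: it applies Cauchy--Schwarz to the truncated sums and then shows that $\alpha\mapsto V_N(\alpha,\alpha_0)$ is monotone on either side of $\alpha_0$, via a Chebyshev-type comparison of the weighted means $\sum_\ell w_\ell\log\ell$ and $\sum_\ell w'_\ell\log\ell$ (citing a weighted-sum inequality). This pins the infimum at the two fixed endpoints $\alpha_0\pm\epsilon$ (intersected with $A$), so the limit only needs to be taken at those two points and no uniform convergence is required. Your version is the more standard and self-contained argument for the lemma as stated. One caveat worth flagging: the paper's monotonicity is not just a convenience --- it is reused in the proof of Lemma \ref{lemma:loglog}, where the infimum must be located on the shrinking annulus $\epsilon/\log L_N\le|\alpha-\alpha_0|<\epsilon$ and a quantitative lower bound of order $(\log L_N)^{-2}$ is extracted at the moving endpoints $\alpha_0\pm\epsilon/\log L_N$. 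A fixed-compact-set argument like yours does not by itself deliver that rate, so if you adopted your proof you would still need to establish the monotonicity (or an equivalent quantitative estimate) separately for the later lemma.
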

\begin{lemma}\label{lemma:T}
For $N \to \infty,$ it holds that
	\begin{equation}\label{eq:limsup}
		\Ex \left [ \sup_A \left | \frac{\widehat{U}_{N} (\alpha) - U_{N}(\alpha)}{U_{N}(\alpha)} \right| \right]= O\left ( \frac{1}{\sqrt{N}}\right)  \qquad \text{and} \qquad  \Ex \left [\sup_A \left | \frac{\widehat{D}_{N} (\alpha) - D_{N}(\alpha)}{D_{N}(\alpha)} \right | \right]= O\left ( \frac{1}{\sqrt{N}}\right) .
	\end{equation}
\end{lemma}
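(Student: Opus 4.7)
The plan is to reduce the supremum over $\alpha \in A=[a_1,a_2]$ to a deterministic bound by exploiting the monotonicity of $\ell \mapsto \ell^{-\alpha}$, then control the remaining mean-absolute deviation via a second-moment calculation based on the Gaussianity of the field and Isserlis' theorem. Specifically, writing
\[
\widehat{U}_N(\alpha) - U_N(\alpha) = \sum_{\ell=1}^{L_N} \ell^{-\alpha} X_\ell, \qquad X_\ell := \frac{1}{N}\sum_{t=1}^N \sum_{m=-\ell}^{\ell} a_{\ell,m}(t) a_{\ell,m}(t-1) - (2\ell+1) C_\ell(1),
\]
the bound $\ell^{-\alpha} \le \ell^{-a_1}$ valid on $A$ gives $\sup_A |\widehat{U}_N(\alpha) - U_N(\alpha)| \le \sum_{\ell=1}^{L_N} \ell^{-a_1} |X_\ell|$, so that by Cauchy--Schwarz $\E\sup_A |\widehat{U}_N - U_N| \le \sum_{\ell=1}^{L_N} \ell^{-a_1} \sqrt{\Var(X_\ell)}$.

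Next I would compute $\Var(X_\ell)$. By the spatial isotropy the harmonic coefficients are independent across $m$, and for each fixed $(\ell,m)$ the process $\{a_{\ell,m}(t)\}_t$ is a centered Gaussian AR(1) with covariance $C_\ell(\tau) = C_\ell(0)\phi_\ell^{|\tau|}$. Applying Isserlis to the fourth-order moments of $a_{\ell,m}(t)a_{\ell,m}(t-1)$ yields
\[
\Cov\bigl(a_{\ell,m}(t)a_{\ell,m}(t-1),\,a_{\ell,m}(s)a_{\ell,m}(s-1)\bigr) = C_\ell(t-s)^2 + C_\ell(t-s+1)C_\ell(t-s-1),
\]
and summing the geometric series in $|t-s|$ (uniformly controllable since $1-\phi_\ell^2 \ge 1-G^2 >0$) gives
\[
\Var(X_\ell) \le \frac{c\,(2\ell+1)\,C_\ell(0)^2}{N} \le \frac{c'\,\ell^{\,1-2\gamma}}{N}
\]
by Condition \ref{cond:semiparam}. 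Plugging in,
\[
\E\sup_A|\widehat{U}_N(\alpha) - U_N(\alpha)| \le \frac{c''}{\sqrt{N}} \sum_{\ell=1}^\infty \ell^{-a_1 + 1/2 - \gamma},
\]
and since $a_1>1$ and $\gamma>2$ we have $-a_1+1/2-\gamma<-5/2$, so the series converges absolutely. Finally, $|U_N(\alpha)|$ is bounded below uniformly in $\alpha\in A$ and $N$ by a positive constant (all summands share the sign of $G_0\ne 0$ and the $\ell=1$ term contributes $3|G_0|\eta_1>0$), hence dividing yields the $O(1/\sqrt{N})$ rate claimed.

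The argument for $\widehat{D}_N$ is fully parallel, with $Y_\ell := \frac{1}{N}\sum_t\sum_m |a_{\ell,m}(t-1)|^2 - (2\ell+1)C_\ell(0)$ in the role of $X_\ell$: Isserlis gives $\Cov(a_{\ell,m}(t)^2,a_{\ell,m}(s)^2)=2C_\ell(t-s)^2$, yielding the identical variance bound $\Var(Y_\ell)\le c\ell^{1-2\gamma}/N$; the uniform bound $\ell^{-2\alpha}\le \ell^{-2a_1}$ replaces $\ell^{-\alpha}\le\ell^{-a_1}$; and $D_N(\alpha)$ is bounded below by $3\eta_1>0$.

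The main obstacle I anticipate is not the summability or the lower bound on $U_N,D_N$, both of which follow mechanically from Condition \ref{cond:semiparam}, but rather the careful Isserlis accounting for the sample autocovariance of a Gaussian AR(1) and the verification that the factor $(1-\phi_\ell^2)^{-1}$ is uniformly bounded in $\ell$ (which is where the assumption $|G|<1$ is essential, and where the proof would fail if one allowed $|\phi_\ell|\to 1$). Everything else reduces to bounding a convergent series of variances term by term.
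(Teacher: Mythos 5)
Your proposal is correct and follows essentially the same route as the paper: both arguments eliminate the supremum over $A$ via the monotone bound $\ell^{-\alpha}\le\ell^{-a_1}$, bound $\inf_A|U_N(\alpha)|$ (resp.\ $\inf_A D_N(\alpha)$) below by a positive constant, and reduce the claim to a per-multipole $L^1$ bound of order $1/\sqrt{(2\ell+1)N}$ for the sample autocovariances, summed over a convergent series. The only difference is that you derive that key moment bound directly from Isserlis' theorem and the geometric decay of $C_\ell(\tau)$, whereas the paper simply cites it from the supplementary material of \cite{cm19}; your computation is consistent with the cited estimate.
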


\begin{lemma}\label{Ghat0} For $N\to \infty$,
	$$
	\frac{\widehat{U}_N(\alpha_0)}{\widehat{D}_N(\alpha_0)} \overset{p}{\to} G_0.
	$$
\end{lemma}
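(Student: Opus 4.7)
\bigskip

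\noindent\textbf{Proof sketch.} The plan is to reduce the statement to a ratio-of-deterministic-sums computation combined with the uniform-in-$\alpha$ $L^1$ bounds provided by Lemma \ref{lemma:T}. The key observation is that, exactly at $\alpha=\alpha_0$, the ratio of the expected values $U_{N}(\alpha_0)/D_{N}(\alpha_0)$ collapses to $G_0$ identically in $N$, so no asymptotic is even needed on the deterministic side.

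\medskip

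\noindent\textbf{Step 1.} Evaluate the ratio of expectations. Plug in the explicit forms $C_{\ell}(1)=G_{0}\eta_{\ell}\ell^{-\gamma-\alpha_{0}}$ and $C_{\ell}(0)=\eta_{\ell}\ell^{-\gamma}$ to obtain
\begin{equation*}
\frac{U_{N}(\alpha_{0})}{D_{N}(\alpha_{0})}
=\frac{\sum_{\ell=1}^{L_{N}}(2\ell+1)\,G_{0}\eta_{\ell}\,\ell^{-\gamma-2\alpha_{0}}}{\sum_{\ell=1}^{L_{N}}(2\ell+1)\,\eta_{\ell}\,\ell^{-\gamma-2\alpha_{0}}}=G_{0},
\end{equation*}
exactly, for every $N$. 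Note also that $D_{N}(\alpha_{0})\to D(\alpha_{0})>0$, which will ensure that the denominator stays bounded away from zero.

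\medskip

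\noindent\textbf{Step 2.} Upgrade Lemma \ref{lemma:T} from $L^{1}$-bounds to convergence in probability. Specialising the suprema in $\alpha$ to the single point $\alpha_0\in A$ and applying Markov's inequality gives
\begin{equation*}
\frac{\widehat{U}_{N}(\alpha_{0})}{U_{N}(\alpha_{0})}\overset{p}{\to}1,\qquad
\frac{\widehat{D}_{N}(\alpha_{0})}{D_{N}(\alpha_{0})}\overset{p}{\to}1,\qquad N\to\infty.
\end{equation*}

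\medskip

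\noindent\textbf{Step 3.} Combine Steps 1 and 2. Writing
\begin{equation*}
\frac{\widehat{U}_{N}(\alpha_{0})}{\widehat{D}_{N}(\alpha_{0})}
=\frac{U_{N}(\alpha_{0})}{D_{N}(\alpha_{0})}\cdot
\frac{\widehat{U}_{N}(\alpha_{0})/U_{N}(\alpha_{0})}{\widehat{D}_{N}(\alpha_{0})/D_{N}(\alpha_{0})}
=G_{0}\cdot\frac{1+o_{p}(1)}{1+o_{p}(1)},
\end{equation*}
the continuous mapping theorem (or Slutsky) yields $\widehat{U}_{N}(\alpha_{0})/\widehat{D}_{N}(\alpha_{0})\overset{p}{\to}G_{0}$, which is the claim. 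The only potential obstacle is making sure $1+o_p(1)$ in the denominator does not vanish, but this is automatic since $D_{N}(\alpha_{0})$ converges to a strictly positive limit, so the event $\{\widehat{D}_{N}(\alpha_{0})>D(\alpha_{0})/2\}$ has probability tending to one. No delicate asymptotic cancellation is needed here; all the work has effectively been done in Lemma \ref{lemma:T}.
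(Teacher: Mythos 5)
Your proof is correct, and the probabilistic input is ultimately the same as in the paper (the $L^1$ concentration of the empirical autocovariances $\widehat{C}_{\ell;N}(\tau)$ around $C_\ell(\tau)$ coming from the supplementary lemmas of \cite{cm19}), but the organization is genuinely different. The paper works additively and re-derives the bounds from scratch: it shows directly that
\begin{equation*}
\Ex\left|\frac{\widehat{U}_N(\alpha_0)}{D_N(\alpha_0)}-G_0\right|=O\!\left(\frac{1}{\sqrt N}\right)
\qquad\text{and}\qquad
\Ex\left|\widehat{D}_N(\alpha_0)-D_N(\alpha_0)\right|=O\!\left(\frac{1}{\sqrt N}\right),
\end{equation*}
using $\Ex\bigl|\widehat{C}_{\ell;N}(1)/C_\ell(0)-G_0\ell^{-\alpha_0}\bigr|\lesssim ((2\ell+1)N)^{-1/2}$, and then combines. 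You instead factor multiplicatively through the exact identity $U_N(\alpha_0)=G_0\,D_N(\alpha_0)$ (valid for every $N$, since $C_\ell(1)=G_0\ell^{-\alpha_0}C_\ell(0)$) and recycle Lemma \ref{lemma:T} at the single point $\alpha_0\in A$ to get the two $1+o_p(1)$ factors. Your route is shorter and avoids duplicating the $L^1$ computation, at the cost of making the lemma logically dependent on Lemma \ref{lemma:T}; the paper's version is self-contained and records the explicit $O(N^{-1/2})$ rate for $\widehat{U}_N(\alpha_0)/D_N(\alpha_0)-G_0$, which is the form actually reused later in the proof of Lemma \ref{lemma:gattoS}. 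Your handling of the denominator (the event $\{\widehat{D}_N(\alpha_0)>D(\alpha_0)/2\}$ having probability tending to one) is a detail the paper leaves implicit, so no gap there.
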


\begin{lemma}\label{lemma:loglog}
	For $N \to \infty$,
	$$
\log L_N |\widehat{\alpha}_N - \alpha_0| \overset{p}{\to} 0.
	$$
\end{lemma}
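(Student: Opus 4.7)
The plan is to quantify the rate of convergence from Theorem \ref{th:cons} by combining Lemma \ref{lemma:T} with a uniform quadratic lower bound for $V_N$ near $\alpha_0$. Since $\widehat{\alpha}_N$ maximizes $\widetilde{R}_N$, the decomposition \eqref{eq:consdec} gives $\Delta \widetilde{R}_N(\widehat{\alpha}_N, \alpha_0) \ge 0$, whence
\begin{equation*}
 0 \;\le\; V_N(\widehat{\alpha}_N, \alpha_0) \;\le\; T_N(\widehat{\alpha}_N, \alpha_0) \;\le\; \sup_{\alpha \in A}|T_N(\alpha, \alpha_0)|.
\end{equation*}
Applying the inequality $|\log(1+x)| \le 2|x|$ to each of the four logarithmic ratios in \eqref{eq:T} --- valid on an event of probability tending to $1$ by Lemma \ref{lemma:T} and Markov's inequality --- one obtains $\sup_{\alpha \in A}|T_N(\alpha, \alpha_0)| = O_p(N^{-1/2})$.

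Next I establish the quadratic lower bound for $V_N$. Differentiating \eqref{eq:V}, one checks that $V_N(\alpha_0, \alpha_0) = 0$ and $\partial_\alpha V_N(\alpha_0, \alpha_0) = 0$, the latter because at $\alpha = \alpha_0$ the two weight sequences $(2\ell+1)C_\ell(0)\ell^{-2\alpha_0}$ and $(2\ell+1)C_\ell(1)\ell^{-\alpha_0}$ are both proportional to $\eta_\ell(2\ell+1)\ell^{-\gamma-2\alpha_0}$. A further derivative yields
\begin{equation*}
 \partial^{2}_{\alpha\alpha} V_N(\alpha_0, \alpha_0) \;=\; 2\left(\frac{M_N}{S_N} - \left(\frac{W_N}{S_N}\right)^{\!2}\right),
\end{equation*}
where $S_N, W_N, M_N$ denote $\sum_{\ell=1}^{L_N}(2\ell+1)\eta_\ell\ell^{-\gamma-2\alpha_0}$ weighted respectively by $1, \log\ell, \log^2\ell$. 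This expression is the variance of $\log\ell$ under a nondegenerate probability measure on the multipoles and is therefore strictly positive; under Condition \ref{cond:semiparam} ($\gamma > 2$, $\alpha_0 > 1$) the three sums are absolutely convergent, so the expression tends to a positive constant. Uniform convergence of $V_N''$ on $A$ to a continuous limit then yields $c > 0$ and an open neighborhood $B$ of $\alpha_0$ such that $V_N''(\alpha) \ge 2c$ on $B$ for all $N$ large enough, and Taylor's theorem delivers
\begin{equation*}
 V_N(\alpha, \alpha_0) \;\ge\; c(\alpha-\alpha_0)^2, \qquad \alpha \in B.
\end{equation*}

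By Theorem \ref{th:cons}, $\Pr(\widehat{\alpha}_N \in B) \to 1$; on this event the two steps combine to give $c(\widehat{\alpha}_N - \alpha_0)^2 \le O_p(N^{-1/2})$, hence $|\widehat{\alpha}_N - \alpha_0| = O_p(N^{-1/4})$. Condition \ref{cond:tronc} is equivalent to $\log L_N = o(N^{1/4})$, so
\begin{equation*}
 \log L_N \cdot |\widehat{\alpha}_N - \alpha_0| \;=\; O_p(\log L_N \cdot N^{-1/4}) \;=\; o_p(1).
\end{equation*}
The main difficulty is verifying the strict positivity of $\partial^{2}_{\alpha\alpha} V_N(\alpha_0, \alpha_0)$ in the limit: this amounts to checking that the spectral weights, renormalized into a probability measure on $\naturals$, are not asymptotically concentrated on a single multipole, which is where the assumption $\gamma > 2$ (ensuring absolute convergence of the $\log^2\ell$-weighted series) and $G_0 \ne 0$ (ensuring the proportionality identity at $\alpha_0$) enter in an essential way.
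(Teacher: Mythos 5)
Your proof is correct, and it takes a genuinely different route from the paper's. The paper does not extract a convergence rate for $\widehat{\alpha}_N$: it localizes on the annulus $B_\epsilon \cap \overline{M}_\epsilon = \{\epsilon/\log L_N \le |\alpha-\alpha_0| < \epsilon\}$, uses the monotonicity of $\alpha \mapsto V_N(\alpha,\alpha_0)$ established in Lemma \ref{lemma:V} to reduce the infimum over that set to the two endpoints $\alpha_0 \pm \epsilon/\log L_N$, and lower-bounds $V_N$ there by $c_\epsilon (\log L_N)^{-2}$ via an explicit Lagrange-identity computation of the Cauchy--Schwarz defect combined with $e^x-1\ge x$ and $\log(1+x)\ge x/(1+x)$; the conclusion then follows from \eqref{eq:trapizzino}, Lemma \ref{lemma:T} and Condition \ref{cond:tronc} in the form $(\log L_N)^2/\sqrt{N}\to 0$. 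You instead obtain the uniform quadratic lower bound $V_N(\alpha,\alpha_0)\ge c(\alpha-\alpha_0)^2$ near $\alpha_0$ by a second-order Taylor expansion --- your computations $V_N(\alpha_0,\alpha_0)=V_N'(\alpha_0,\alpha_0)=0$ and $\partial^2_{\alpha\alpha}V_N(\alpha_0,\alpha_0)=2\bigl(M_N/S_N-(W_N/S_N)^2\bigr)>0$ are correct, and the uniform convergence of $V_N''$ needed to make the bound uniform in $N$ does hold, since $U_N,D_N$ and their derivatives converge uniformly on $A$ with denominators bounded away from zero --- and then convert $0\le V_N(\widehat{\alpha}_N,\alpha_0)\le \sup_A|T_N(\alpha,\alpha_0)|=O_p(N^{-1/2})$ into the rate $|\widehat{\alpha}_N-\alpha_0|=O_p(N^{-1/4})$, after which the claim is immediate because Condition \ref{cond:tronc} is precisely $\log L_N=o(N^{1/4})$. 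Both arguments ultimately rest on the same decomposition \eqref{eq:consdec} and the same quantitative fact (quadratic growth of $V_N$ away from $\alpha_0$); yours packages it as an explicit intermediate rate, which is a slightly stronger statement, at the cost of the extra uniform-convergence step for $V_N''$, while the paper's version never differentiates $V_N$. One small correction to your closing remark: positivity of the limiting second derivative only requires that the weights $\eta_\ell(2\ell+1)\ell^{-\gamma-2\alpha_0}$ charge at least two multipoles, which is automatic; the role of $G_0\ne 0$ is merely to keep $U_N$ nonzero so that the logarithms in \eqref{eq:V} are well defined, not to guarantee that this variance is positive.
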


\begin{proof}[Proof of Theorem \ref{th:cons}]
Let us first prove \eqref{eq:alpha}. For $\epsilon > 0$, let $B_\epsilon = \{\alpha: |\alpha- \alpha_0| < \epsilon \}.$ Without loss of generality, choose $\epsilon >0$ such that $\overline{B}_\epsilon \cap A$ is non-empty. We have that
\begin{align}
	\notag	\Pr\left (|\widehat{\alpha}_N -\alpha_0| \ge \epsilon \right) &= \Pr \left ( \widehat{\alpha}_N\in \overline{B}_\epsilon \cap A \right) \le \Pr  \left ( \sup_{\overline{B}_\epsilon \cap A} \widetilde{R}_{N} (\alpha) \ge  \widetilde{R}_{N} (\alpha_0) \right) \\
	\notag	&=   \Pr  \left ( \sup_{\overline{B}_\epsilon \cap A} \Delta \widetilde R_{N} (\alpha, \alpha_0) \ge  0 \right)\\
	 &\le \Pr  \left( \sup_{ A}  |T_{N} (\alpha, \alpha_0)| \ge  \inf_{\overline{B}_\epsilon \cap A} V_{N}(\alpha, \alpha_0)   \right) .\label{eq:constup}
\end{align}
Using \eqref{eq:T}, for $M > 0$, we have that
\begin{align*}
 \Pr  \left( \sup_{ A}  |T_{N} (\alpha, \alpha_0)| \ge  4 M \right)  &\le \Pr  \left( \sup_{ A}  \left | \log \frac{\widehat{U}_{N}(\alpha)}{U_{N}(\alpha)}\right  | \ge   M/2 \right)  + \Pr  \left( \left   | \log \frac{\widehat{U}_{N}(\alpha_0)}{U_{N}(\alpha_0)}\right | \ge   M/2 \right)  \\&+\Pr  \left( \sup_{ A} \left  | \log \frac{\widehat{D}_{N}(\alpha)}{D_{N}(\alpha)} \right| \ge   M \right)  + \Pr  \left(  \left | \log \frac{\widehat{D}_{N}(\alpha_0)}{D_{N}(\alpha_0)} \right| \ge   M \right)  \\
 &\le 2 \Pr  \left( \sup_{ A}  \left | \log \frac{\widehat{U}_{N}(\alpha)}{U_{N}(\alpha)}\right  | \ge   M/2 \right)   + 2 \Pr  \left( \sup_{ A} \left  | \log \frac{\widehat{D}_{N}(\alpha)}{D_{N}(\alpha)} \right| \ge   M \right) .
\end{align*}
From the inequality $$|\log(1+x)| \le 2 |x|, \quad \text{for }|x| \le 1/2, $$ we deduce that, for any nonnegative random variable $Y$,
	$$
	\Pr ( |\log Y | \ge \delta) \le 2 \Pr (|Y-1| \ge 2 \delta) \qquad \text{for } \delta > 0.
	$$
As a consequence, using Markov inequality yields
\begin{align}
\notag \Pr  \left( \sup_{ A}  |T_{N} (\alpha, \alpha_0)| \ge  4 M \right)  & \le  4 \Pr  \left( \sup_{ A}  \left | \frac{\widehat{U}_{N} (\alpha) - U_{N}(\alpha)}{U_{N}(\alpha)} \right| \ge   M \right)   + 4 \Pr  \left( \sup_{ A}\left | \frac{\widehat{D}_{N} (\alpha) - D_{N}(\alpha)}{D_{N}(\alpha)} \right| \ge   2M \right)
 \\& \le \frac4M \, \Ex \left [ \sup_A \left | \frac{\widehat{U}_{N} (\alpha) - U_{N}(\alpha)}{U_{N}(\alpha)} \right| \right] + \frac2M \, \Ex \left [ \sup_A \left | \frac{\widehat{D}_{N} (\alpha) - D_{N}(\alpha)}{D_{N}(\alpha)} \right| \right]  \label{eq:trapizzino}.
 \end{align}
Thus, by Lemma \ref{lemma:V} and Lemma \ref{lemma:T}, we obtain the weak consistency for $\widehat\alpha_N$ \eqref{eq:alpha}.
\\ Let us now consider \eqref{eq:G}. By definition, we have
	$$
	\widehat{G}_N =  \frac{\widehat{U}_{N} (\widehat{\alpha}_N)}{\widehat{D}_{N} (\widehat{\alpha}_N) }  = \frac{\widehat{U}_{N} (\widehat{\alpha}_N)}{\widehat{D}_{N} (\widehat{\alpha}_N) }  \cdot \frac{D_N (\alpha_0)}{U_N (\alpha_0) } \cdot \frac{U_N (\alpha_0)}{D_N (\alpha_0) } = \frac{\widehat{U}_{N} (\widehat{\alpha}_N)}{U_N (\alpha_0)}  \cdot \frac{D_N (\alpha_0)}{\widehat{D}_N (\alpha_0) }  \cdot G_0.
	$$
	Hence, we need to show that, for $N \to \infty$,
	$$
	\frac{\widehat{U}_{N} (\widehat{\alpha}_N) - U_N (\alpha_0)}{U_N (\alpha_0)} \overset{p}{\to} 0,\qquad  \frac{\widehat{D}_{N} (\widehat{\alpha}_N) - D_N (\alpha_0)}{D_N (\alpha_0)} \overset{p}{\to} 0 .
	$$
	First of all, note that
	\begin{align*}
	\widehat{D}_{N} (\widehat{\alpha}_N)- D_N (\alpha_0)&=	\sum_\ell \ell^{-2\widehat{\alpha}_N} (2\ell+1) \widehat{C}_{\ell;N}(0)-\sum_\ell \ell^{-2{\alpha}_0} (2\ell+1){C}_{\ell}(0)\\ &=  \sum_\ell \ell^{-2(\widehat{\alpha}_N-\alpha_0)} \ell^{-2\alpha_0} (2\ell+1) \widehat{C}_{\ell;N}(0)-\sum_\ell \ell^{-2{\alpha}_0} (2\ell+1){C}_{\ell}(0)  \\
		&\pm  \sum_\ell \ell^{-2(\widehat{\alpha}_N-\alpha_0)} \ell^{-2\alpha_0} (2\ell+1) C_{\ell}(0)\\
		&= \sum_\ell \ell^{-2\left(\widehat{\alpha}_N-\alpha_0\right)} \ell^{-2\alpha_0}(2\ell+1)\left( \widehat{C}_{\ell;N}(0)-{C}_{\ell}(0)\right)\\&+\sum_\ell \left(\ell^{-2\left(\widehat{\alpha}_N-\alpha_0\right)}-1 \right)\ell^{-2\alpha_0}(2\ell+1) {C}_{\ell}(0) \\
		&=D_1 + D_2.
	\end{align*}
	Thus, it holds that
	\begin{equation*}
		\Pr\left(\left\vert \widehat{D}_{N} (\widehat{\alpha}_N) - D_{N} (\alpha_0) \right \vert \ge 2\varepsilon\right) \leq \Pr(|D_1| \ge \varepsilon) +  \Pr(|D_2| \ge \varepsilon).
	\end{equation*}
	For the first term, we choose a constant $\delta > 4$, and we obtain
	\begin{align*}
		\Pr(|D_1| \ge \varepsilon)  &\le  \Pr\left (|D_1| \ge \varepsilon \,  \cap \, \left \vert \widehat{\alpha}_{N}-\alpha_0\right\vert < \frac{1}{\delta} \right) + \Pr\left (\left \vert \widehat{\alpha}_{N}-\alpha_0\right\vert\ge \frac{1}{\delta} \right)\\
		&\le \Pr \left (  \sum_\ell \ell^{2/\delta} \ell^{-2\alpha_0}(2\ell+1) C_\ell(0) \left | \frac{\widehat{C}_{\ell;N}(0)}{C_\ell(0)}-1\right| \ge   \varepsilon \right)+ o(1) \\
		& \le \frac{c}{\epsilon}  \sum_\ell \ell^{2/\delta} \ell^{-2\alpha_0}(2\ell+1) C_\ell(0) \frac{1}{\sqrt{(2\ell+1)N}} + o(1) = o(1),
	\end{align*}
where $c>0$. \\On the other hand, for a suitably small $%
\delta >0$,%
\begin{eqnarray*}
\Pr \left( \left\vert D_2\right\vert \geq \varepsilon \right)
&=&\Pr \left( \left[ \left\vert D_2\right\vert \geq \varepsilon%
\right] \cap \left[ \log L_N\left| \alpha _{0}-\widehat{\alpha }_{N}\right| %
\right] <\delta \right) +\Pr \left( \log L_N\left| \alpha _{0}-\widehat{\alpha 
}_{N}\right| \geq \delta \right) \\
&=&\Pr \left( \left[ \left\vert D_2\right\vert \geq \varepsilon%
\right] \cap \left[ \log L_N\left| \alpha _{0}-\widehat{\alpha }_{N}\right| %
\right] <\delta \right) +o(1),
\end{eqnarray*}%
and using $\left\vert e^{-x}-1\right\vert \leq 2|x|$ for $|x|\le1,$ we
obtain%
\begin{equation*}
\left\vert \ell^{-2\left( \alpha _{0}-\widehat{\alpha }_{N}\right)
}-1\right\vert =\left\vert \exp (-2\log \ell\left( \alpha _{0}-\widehat{\alpha }%
_{N})\right) -1\right\vert \leq 4\log \ell\left\vert \alpha _{0}-\widehat{\alpha }%
_{N}\right\vert ;
\end{equation*}%
hence,
\begin{align*}
&\Pr \left( \left[ \left\vert D_2\right\vert \geq \frac{\varepsilon }{2}%
\right] \cap \left[ \log L_N \left| \alpha _{0}-\widehat{\alpha }_{N}\right |%
\right] <\delta \right)\\
&\leq \Pr \left( \sum_\ell \left | \ell^{-2\left(\widehat{\alpha}-\alpha_0\right)}-1 \right|\ell^{-2\alpha_0}(2\ell+1) {C}_{\ell}(0)  \geq \varepsilon\cap \left[ \log L_N\left|
\alpha _{0}-\widehat{\alpha }_{N}\right| \right] <\delta \right) \\
&\leq \Pr \left(4  \log L_N \left\vert \alpha _{0}-\widehat{\alpha }%
_{N} \right\vert  \sum_\ell   \ell^{-2\alpha_0}(2\ell+1) {C}_{\ell}(0)  \geq \varepsilon  \right) = o(1).
\end{align*}%
The term $\frac{\widehat{U}_{N} (\widehat{\alpha}_N) - U_N (\alpha_0)}{U_N (\alpha_0)}$ follows a similar argument. Then, by applying Slutsky theorem, we obtain the result.
\end{proof}

\subsection{Asymptotic Normality}
In this section, our aim is to establish the asymptotic Gaussianity for the estimator $\widehat{\alpha}_N$, following the lines driven by \cite{Robinson}, see also 
\cite{am,hayashi,NmcF}.

Recall Equation \eqref{eq:gstar}. For each $N>1$ there exists $\overline{\alpha}_N: |\overline{\alpha}_N - \alpha_0| \le |\widehat{\alpha}_N -\alpha_0|$ such
that, with probability one,
\begin{equation*}
(\widehat{\alpha}_N - \alpha_0) = - \frac{S_N(\alpha_0)}{Q_N(\overline{\alpha}_N)},
\end{equation*}
where, for a generic $\alpha \in A$,
\begin{align*}
	&	S_{N}(\alpha)= \frac{\diff}{\diff \alpha} R_{N}(G^*, \alpha), \qquad  Q_{N}(\alpha)= -\frac{\diff^2}{\diff \alpha^2} R_{N}(G^*, \alpha)=-\frac{\diff}{\diff \alpha} S_{N}(\alpha),
\end{align*}
are the score and the information function respectively.
For the sake of brevity, when it does not cause confusion, we will omit the dependence on $\alpha$. Thus, the score and the information functions are given respectively by
\begin{align*}
	&	S_{N}(\alpha) = \frac{-2\widehat{U}^\prime_{N}\widehat{U}_{N}\widehat{D}_{N}+\widehat{D}^{\prime}_{N}\widehat{U}^2_{N}}{\widehat{D}^2_{N}};\\
	& Q_{N}(\alpha)= \frac{2\widehat{U}^{\prime\prime}_{N}\widehat{U}_{N}\widehat{D}^2_{N}+2\left(\widehat{U}^\prime_N\right)^2\widehat{D}_{N}^2- \widehat{D}_{N}^{\prime\prime}\widehat{U}_{N}^2\widehat{D}_{N} -4\widehat{D}^{\prime}_{N}\widehat{D}_{N}\widehat{U}^\prime_{N}\widehat{U}_{N}+2\widehat{U}^2_{N}\left(\widehat{D}_{N}^\prime\right)^2}{\widehat{D}^3_{N}}.
\end{align*}
Define also
$$
Q(\alpha) = \frac{2U'' U D^2+2\left( U' \right)^2 D^2-D'' U^2D -4D'DU'U+2U^2\left(D' \right)^2}{D^3}.
$$
Before stating the main theorem, we introduce two ancillary results regarding the convergence of $S_N\left(\alpha_0\right)$ and $Q_N\left(\overline{\alpha}_N\right).$ 
\begin{lemma}\label{lemma:gattoS}Under Conditions \ref{cond:semiparam} and \ref{cond:tronc}, as $N \rightarrow \infty$, it holds that
\begin{equation*}
	\sqrt{N} S_{N}(\alpha_0) \overset{d}{\to}  \mathcal{N}\left (0, 	G_0^2\sum_\ell  \left (2\log \ell + \frac{D'}{D} \right)^2 \ell^{-2\alpha_0}(2\ell+1) C_\ell(0)C_{\ell;Z}   \right).
\end{equation*}
\end{lemma}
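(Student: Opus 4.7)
The plan is to rewrite $\sqrt{N}\,S_N(\alpha_0)$ as an asymptotically equivalent martingale sum in the innovations $a_{\ell,m;Z}(t)$ and then apply a CLT for martingale difference arrays. The crucial input is the $\ARuno$ identity $a_{\ell,m}(t)-G_0\ell^{-\alpha_0}a_{\ell,m}(t-1)=a_{\ell,m;Z}(t)$, which, after multiplication by $\ell^{-\alpha_0}a_{\ell,m}(t-1)$ (respectively $\log\ell\cdot\ell^{-\alpha_0}a_{\ell,m}(t-1)$) and summation over $t,\ell,m$, yields the clean identities
\begin{align*}
\widehat{U}_N(\alpha_0)-G_0\widehat{D}_N(\alpha_0)&=M_N:=\frac{1}{N}\sum_{t=1}^N\sum_{\ell=1}^{L_N}\sum_{m=-\ell}^\ell \ell^{-\alpha_0}a_{\ell,m}(t-1)a_{\ell,m;Z}(t),\\
-2\widehat{U}'_N(\alpha_0)+G_0\widehat{D}'_N(\alpha_0)&=2M'_N,\quad M'_N:=\frac{1}{N}\sum_{t=1}^N\sum_{\ell=1}^{L_N}\sum_{m=-\ell}^\ell \log\ell\cdot\ell^{-\alpha_0}a_{\ell,m}(t-1)a_{\ell,m;Z}(t).
\end{align*}

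Setting $\widehat{G}^{*}_N:=\widehat{U}_N(\alpha_0)/\widehat{D}_N(\alpha_0)$ and algebraically factoring $S_N$, one obtains
\[
S_N(\alpha_0)=\widehat{G}^{*}_N\bigl[-2\widehat{U}'_N(\alpha_0)+\widehat{G}^{*}_N\widehat{D}'_N(\alpha_0)\bigr]=2\widehat{G}^{*}_N M'_N+\widehat{G}^{*}_N\,\frac{\widehat{D}'_N(\alpha_0)}{\widehat{D}_N(\alpha_0)}\,M_N,
\]
using $\widehat{G}^{*}_N-G_0=M_N/\widehat{D}_N(\alpha_0)$ in the second equality. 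By Lemma \ref{Ghat0}, $\widehat{G}^{*}_N\overset{p}{\to}G_0$, and by Lemma \ref{lemma:T} applied in parallel to $\widehat{D}_N$ and to $\widehat{D}'_N$ (whose analysis is word-for-word identical up to the extra factor $\log\ell$), $\widehat{D}'_N(\alpha_0)/\widehat{D}_N(\alpha_0)\overset{p}{\to}D'(\alpha_0)/D(\alpha_0)$. Since a direct second-moment computation gives $M_N,\,M'_N=O_p(N^{-1/2})$ (the summands at distinct $t$ are orthogonal martingale differences, and the variances sum absolutely thanks to $\alpha_0>1$ and $\gamma>2$), a Slutsky argument reduces the claim to the asymptotic normality of
\[
G_0\cdot\frac{1}{\sqrt N}\sum_{t=1}^N\xi_t^{(N)},\qquad \xi_t^{(N)}:=\sum_{\ell=1}^{L_N}\sum_{m=-\ell}^\ell\Bigl(2\log\ell+\frac{D'(\alpha_0)}{D(\alpha_0)}\Bigr)\ell^{-\alpha_0}a_{\ell,m}(t-1)a_{\ell,m;Z}(t).
\]

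I would then invoke a martingale difference CLT for triangular arrays (Hall--Heyde type) with respect to $\mathcal{F}_t:=\sigma(a_{\ell,m;Z}(s):s\le t,\ \ell\in\mathbb{N},\ |m|\le\ell)$. The martingale property is immediate since $a_{\ell,m}(t-1)\in\mathcal{F}_{t-1}$ while $a_{\ell,m;Z}(t)\perp\mathcal{F}_{t-1}$ has variance $C_{\ell;Z}$. The conditional variance is
\[
\frac{1}{N}\sum_{t=1}^N\mathbb{E}\bigl[(\xi_t^{(N)})^2\mid\mathcal{F}_{t-1}\bigr]=\sum_{\ell=1}^{L_N}\sum_{m=-\ell}^\ell\Bigl(2\log\ell+\frac{D'(\alpha_0)}{D(\alpha_0)}\Bigr)^{\!2}\ell^{-2\alpha_0}C_{\ell;Z}\cdot\frac{1}{N}\sum_{t=1}^N a_{\ell,m}^2(t-1),
\]
and ergodicity of the stationary Gaussian $\ARuno$ projections $\{a_{\ell,m}(t)\}_t$ drives the inner time-average to $C_\ell(0)$, yielding the target variance $G_0^2\sum_\ell(2\ell+1)\bigl(2\log\ell+D'/D\bigr)^2\ell^{-2\alpha_0}C_\ell(0)C_{\ell;Z}$. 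Lindeberg is checked via a Lyapunov fourth-moment bound, which is available by Gaussian hypercontractivity together with the uniform summability of the variance series.

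The main obstacle is the triangular-array aspect, because $L_N$ diverges with $N$: one must interchange the $N\to\infty$ limit with the sum $\sum_{\ell=1}^{L_N}$ in the conditional-variance step, and control uniformly in $\ell\le L_N$ the fluctuations of $\frac{1}{N}\sum_t a_{\ell,m}^2(t-1)$ around $C_\ell(0)$. The absolute summability granted by $\alpha_0>1,\gamma>2$, together with Condition \ref{cond:tronc} ($(\log L_N)^2/\sqrt N\to 0$), is what ultimately absorbs the logarithmic weights $\log\ell$ appearing both in the variance and in the Slutsky remainder $(\widehat{G}^{*}_N-G_0)\sqrt{N}M'_N$.
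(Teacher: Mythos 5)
Your algebraic reduction is the same as the paper's: you factor $S_N(\alpha_0)=\widehat{G}^{*}_N\bigl[-2\widehat{U}'_N(\alpha_0)+\widehat{G}^{*}_N\widehat{D}'_N(\alpha_0)\bigr]$, use the $\ARuno$ identity to express $-2\widehat{U}'_N+G_0\widehat{D}'_N$ and $\widehat{U}_N-G_0\widehat{D}_N$ as normalized sums of the products $a_{\ell,m}(t-1)a_{\ell,m;Z}(t)$, replace $\widehat{G}^{*}_N$ and $\widehat{D}'_N/\widehat{D}_N$ by their limits $G_0$ and $D'(\alpha_0)/D(\alpha_0)$ via Lemma \ref{Ghat0} and the $\widehat{D}'_N$-analogue of Lemma \ref{lemma:T}, and reduce to the asymptotic normality of $\sum_{\ell}\bigl(2\log\ell+\tfrac{D'}{D}\bigr)\ell^{-\alpha_0}\sqrt{2\ell+1}\,C_\ell(0)B_{\ell;N}$ (in the paper's notation $C_N$, in yours $N^{-1/2}\sum_t\xi^{(N)}_t$), with a final Slutsky step; the limiting variance you compute matches the paper's. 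Where you genuinely diverge is the CLT engine: the paper stays inside the second Wiener chaos and applies the Fourth Moment Theorem of \cite[Theorem 5.2.7]{noupebook}, so that the entire normality proof reduces to checking that the fourth cumulant $\tfrac{6}{N}\sum_\ell \ell^{-4\alpha_0}(2\ell+1)^2C_\ell^2(0)C_{\ell;Z}^2$ vanishes — a purely deterministic computation that exploits Gaussianity of $T$. You instead propose a Hall--Heyde martingale-difference-array CLT with filtration $\mathcal{F}_t$, which is more general (it would survive non-Gaussian innovations) but costs you two extra verifications the paper never needs: convergence in probability of the conditional variance, which requires controlling $\frac1N\sum_t a_{\ell,m}^2(t-1)-C_\ell(0)$ uniformly enough in $\ell\le L_N$ to pass the limit through the divergent sum (the bound $\E\,|\widehat{C}_{\ell;N}(0)/C_\ell(0)-1|\le c/\sqrt{(2\ell+1)N}$ from \cite{cm19} and the summability from $\alpha_0>1$, $\gamma>2$ do close this, as you indicate), and a Lindeberg/Lyapunov condition, which Gaussian hypercontractivity supplies. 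Both routes are valid; your plan is correct but leaves the conditional-variance and Lindeberg verifications, which you rightly flag as the technical core, at the level of a sketch, whereas the fourth-moment route discharges them in two lines.
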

\begin{lemma}\label{lemma:gattoQ}
Under Conditions \ref{cond:semiparam} and \ref{cond:tronc}, as $N \rightarrow \infty$, it holds that
$$
Q_N(\overline{\alpha}_N) \overset{p}{\to} Q(\alpha_0),
$$
where
$$
Q(\alpha_0) = \frac{G_0^2}{2} \left ( D'' (\alpha_0 ) - \frac{\left (D'(\alpha_0) \right)^2}{D(\alpha_0)} \right) .
$$
\end{lemma}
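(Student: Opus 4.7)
The strategy is threefold: first establish a uniform (in $\alpha\in A=[a_1,a_2]$) law of large numbers for the six sample averages $\widehat U_N, \widehat U_N^\prime, \widehat U_N^{\prime\prime}, \widehat D_N, \widehat D_N^\prime, \widehat D_N^{\prime\prime}$; then invoke the continuous mapping theorem on the rational expression defining $Q_N$; and finally use the consistency of $\overline{\alpha}_N$ (inherited from Theorem \ref{th:cons}) to pass from a uniform statement to the pointwise limit at $\alpha_0$, at which point the announced closed form follows from three one-line identities.

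\textbf{Step 1: uniform weak law for the six sums.} Following exactly the strategy of Lemma \ref{lemma:T}, I would show that each of the six sample quantities converges to its deterministic counterpart uniformly over $\alpha\in A$, at rate $O_p(1/\sqrt N)$. For $\widehat U_N$ and $\widehat D_N$ this is Lemma \ref{lemma:T} itself; for the log-weighted versions ($\widehat U_N^\prime,\widehat U_N^{\prime\prime},\widehat D_N^\prime,\widehat D_N^{\prime\prime}$) the very same variance/covariance computation applies, because inserting a factor $\log^k\ell$ with $k=1,2$ merely costs a multiplicative factor $\log^k L_N$, which remains negligible under Condition \ref{cond:tronc}. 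In addition, the deterministic sums $U_N, U_N^\prime, U_N^{\prime\prime}, D_N, D_N^\prime, D_N^{\prime\prime}$ converge to their limits $U, U^\prime, \ldots, D^{\prime\prime}$ uniformly on $A$, by the uniform-in-$\alpha$ tail estimate already recorded in Section \ref{sec:main}.

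\textbf{Step 2: uniform convergence of $Q_N$ to $Q$.} Since $A$ is compact and $D(\alpha)\ge D(a_2)>0$ on $A$, the denominator $\widehat D_N(\alpha)^3$ in the defining formula for $Q_N$ is bounded away from zero on $A$ with probability tending to one. The rational function mapping the six sums to $Q_N$ is continuous on the relevant region, so Step 1 together with the continuous mapping theorem gives
$$
\sup_{\alpha\in A}\bigl|Q_N(\alpha)-Q(\alpha)\bigr|\overset{p}{\to}0.
$$

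\textbf{Step 3: passing to the random argument.} By the mean-value construction $|\overline{\alpha}_N-\alpha_0|\le|\widehat\alpha_N-\alpha_0|$ and Theorem \ref{th:cons}, $\overline{\alpha}_N\overset{p}{\to}\alpha_0$. Since $Q$ is continuous on $A$, the inequality
$$
|Q_N(\overline{\alpha}_N)-Q(\alpha_0)|\le\sup_{\alpha\in A}|Q_N(\alpha)-Q(\alpha)|+|Q(\overline{\alpha}_N)-Q(\alpha_0)|
$$
has both right-hand-side terms $o_p(1)$, yielding $Q_N(\overline{\alpha}_N)\overset{p}{\to}Q(\alpha_0)$.

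\textbf{Step 4: closed form at $\alpha_0$.} From \eqref{eq:cielle} and Condition \ref{cond:semiparam}, $C_\ell(1)=G_0 C_\ell(0)\ell^{-\alpha_0}$, which immediately gives the three identities
$$
U(\alpha_0)=G_0 D(\alpha_0),\qquad U^\prime(\alpha_0)=\tfrac{G_0}{2} D^\prime(\alpha_0),\qquad U^{\prime\prime}(\alpha_0)=\tfrac{G_0}{4} D^{\prime\prime}(\alpha_0).
$$
Substituting these into the rational expression for $Q(\alpha)$ collapses the two $D^{\prime\prime}D^3$-type terms into a single one, and the three $(D^\prime)^2 D^2$-type terms into a single $\tfrac{G_0^2}{2}(D^\prime)^2 D^2$; the announced $\tfrac{G_0^2}{2}\bigl(D^{\prime\prime}(\alpha_0)-(D^\prime(\alpha_0))^2/D(\alpha_0)\bigr)$ drops out after dividing by $D^3$.

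The main obstacle is Step 1 for the two quantities carrying $\log^2\ell$: although it mimics the $k=0$ case treated in Lemma \ref{lemma:T}, one has to verify that the uniform-in-$\alpha$ second-moment bound survives the extra logarithmic weight, which boils down to checking summability against $\ell^{-\gamma-2\alpha}$ for $\gamma>2$ and $\alpha\ge a_1>1$. Steps 2--3 are then standard (continuous mapping plus a sandwich for $\overline{\alpha}_N$), and Step 4 is a short algebraic check.
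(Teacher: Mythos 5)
Your argument is correct, but it is organized quite differently from the paper's. The paper does not establish uniform convergence of $Q_N$ over $A$; instead it proves convergence of each of the six constituent sums directly at the random point $\overline{\alpha}_N$, splitting for instance $\widehat{D}''_{N}(\overline{\alpha}_N)-D''_N(\alpha_0)$ into a stochastic part (the fluctuation $\widehat{C}_{\ell;N}(0)-C_\ell(0)$ with the exponent frozen at $\alpha_0$) and a deterministic part driven by $\ell^{-2(\overline{\alpha}_N-\alpha_0)}-1$; the latter is controlled through $\vert \ell^{-2(\overline{\alpha}_N-\alpha_0)}-1\vert\le 4\log \ell\,\vert\overline{\alpha}_N-\alpha_0\vert$, which forces the paper to invoke the sharper rate $\log L_N\vert\widehat{\alpha}_N-\alpha_0\vert\overset{p}{\to}0$ of Lemma \ref{lemma:loglog}. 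Your route (uniform weak law on the compact set $A$ for all six sums, continuous mapping with the denominator bounded away from zero, then the sandwich $\vert Q_N(\overline{\alpha}_N)-Q(\alpha_0)\vert\le\sup_A\vert Q_N-Q\vert+\vert Q(\overline{\alpha}_N)-Q(\alpha_0)\vert$) needs only plain consistency of $\overline{\alpha}_N$ and is in that respect more economical; the price is the uniform bound for the $\log^k\ell$-weighted sums, which you correctly single out as the only real verification, and which in fact costs nothing --- not even the $\log^k L_N$ factor you allow for --- because the series $\sum_\ell \log^2\ell\,(2\ell+1)^{1/2}\ell^{-2a_1}C_\ell(0)$ already converges under Condition \ref{cond:semiparam}. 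Your derivation of the closed form from $U(\alpha_0)=G_0D(\alpha_0)$, $U'(\alpha_0)=\tfrac{G_0}{2}D'(\alpha_0)$, $U''(\alpha_0)=\tfrac{G_0}{4}D''(\alpha_0)$ is also tidier than the paper's four-fold-sum manipulation; note, however, that the paper uses that manipulation to certify $Q(\alpha_0)\ne 0$, a fact Theorem \ref{th:asn} actually consumes, so you should still record it (it follows from strict Cauchy--Schwarz, $D(\alpha_0)D''(\alpha_0)>(D'(\alpha_0))^2$, since the weights $\log\ell$ are nonconstant).

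One caveat on Step 4: carrying out the substitution literally, the two $D''D^3$-type terms combine with coefficient $\tfrac{G_0^2}{2}-G_0^2=-\tfrac{G_0^2}{2}$, so the algebra yields $Q(\alpha_0)=-\tfrac{G_0^2}{2}\bigl(D''(\alpha_0)-(D'(\alpha_0))^2/D(\alpha_0)\bigr)$, the negative of the announced expression. This is the correct sign: $Q_N$ equals the second derivative of $\widehat{U}_N^2/\widehat{D}_N$ evaluated near its maximizer, and the paper's own four-fold-sum computation exhibits $Q(\alpha_0)<0$, so the positive closed form in the statement is a sign slip of the paper that you have inherited rather than introduced. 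It is harmless downstream, since only $Q(\alpha_0)^2$ enters the asymptotic variance in Theorem \ref{th:asn}.
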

We are now ready to state the main result described in this section.
\begin{theorem}\label{th:asn} Under Conditions \ref{cond:semiparam} and \ref{cond:tronc}, as $N \rightarrow \infty$, it holds that

	\begin{equation*}
		\sqrt{N} (\widehat{\alpha}_N - \alpha_0) \overset{d}{\to}  \mathcal{N}\left (0, 	\sigma^2(\theta)   \right),
	\end{equation*}
	where
	$$
	\sigma^2(\theta) = \frac{4}{G_0^2}\frac{\sum_{\ell\in \naturals}  \left (2\log \ell + \frac{D'(\alpha_0)}{D(\alpha_0)} \right)^2 \ell^{-2\alpha_0}(2\ell+1) C_\ell(0)C_{\ell;Z}}{\left( \sum_{\ell\in\naturals}  \left (2\log \ell + \frac{D'(\alpha_0)}{D(\alpha_0)} \right)^2 \ell^{-2\alpha_0}(2\ell+1) C_\ell(0) \right)^2}.
	$$
\end{theorem}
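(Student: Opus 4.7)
The plan is to derive the asymptotic distribution of $\widehat{\alpha}_N$ from the mean-value identity already recorded just before the statement,
\begin{equation*}
\sqrt{N}\bra{\widehat{\alpha}_N - \alpha_0} \;=\; -\,\frac{\sqrt{N}\, S_N(\alpha_0)}{Q_N(\overline{\alpha}_N)},
\end{equation*}
and then combine this representation with Lemma \ref{lemma:gattoS} and Lemma \ref{lemma:gattoQ} through Slutsky's theorem. The only non-routine algebraic point is to recognise that the variance obtained in this way coincides with the one written in the statement.

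First I would justify the mean-value expansion. Since $\widehat{\alpha}_N$ minimizes $R_N(G^*,\cdot)$ over the compact interval $A=[a_1,a_2]$ and $\widehat{\alpha}_N\overset{p}{\to}\alpha_0$ by Theorem \ref{th:cons}, with probability tending to one $\widehat{\alpha}_N$ lies in the interior of $A$ (assuming $\alpha_0\in(a_1,a_2)$), so that the first-order condition $S_N(\widehat{\alpha}_N)=0$ holds. A one-term Taylor expansion of $S_N$ around $\alpha_0$ then produces some $\overline{\alpha}_N$ with $|\overline{\alpha}_N-\alpha_0|\le|\widehat{\alpha}_N-\alpha_0|$ satisfying $S_N(\alpha_0)+S_N'(\overline{\alpha}_N)(\widehat{\alpha}_N-\alpha_0)=0$, and the displayed identity follows from $S_N'=-Q_N$.

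Next, by Lemma \ref{lemma:gattoS} the numerator $\sqrt{N}\,S_N(\alpha_0)$ converges weakly to a centred Gaussian with variance $G_0^2\sum_{\ell}\bra{2\log\ell+D'(\alpha_0)/D(\alpha_0)}^2\ell^{-2\alpha_0}(2\ell+1)C_\ell(0)C_{\ell;Z}$. Consistency forces $\overline{\alpha}_N\overset{p}{\to}\alpha_0$, so by Lemma \ref{lemma:gattoQ} the denominator satisfies $Q_N(\overline{\alpha}_N)\overset{p}{\to}Q(\alpha_0)=\tfrac{G_0^2}{2}\bra{D''(\alpha_0)-(D'(\alpha_0))^2/D(\alpha_0)}$. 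The limit is strictly positive by Cauchy--Schwarz applied to the weights $(2\ell+1)C_\ell(0)\ell^{-2\alpha_0}$ with $\log\ell$ and $1$ (which are not proportional), so Slutsky's theorem yields a centred Gaussian limit with variance $\bra{G_0^2\sum_{\ell}\bra{2\log\ell+D'/D}^2\ell^{-2\alpha_0}(2\ell+1)C_\ell(0)C_{\ell;Z}}/Q(\alpha_0)^2$.

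The final step is the cosmetic identity matching this expression to $\sigma^2(\theta)$. Expanding $\bra{2\log\ell+D'(\alpha_0)/D(\alpha_0)}^2$ and using $\sum_{\ell}(2\ell+1)C_\ell(0)\ell^{-2\alpha_0}=D(\alpha_0)$, $\sum_{\ell}(2\ell+1)C_\ell(0)\ell^{-2\alpha_0}\log\ell=-D'(\alpha_0)/2$, $\sum_{\ell}(2\ell+1)C_\ell(0)\ell^{-2\alpha_0}\log^2\ell=D''(\alpha_0)/4$ term-by-term, one verifies
\begin{equation*}
\sum_{\ell}\bra{2\log\ell+\frac{D'(\alpha_0)}{D(\alpha_0)}}^2\ell^{-2\alpha_0}(2\ell+1)C_\ell(0) \;=\; D''(\alpha_0)-\frac{(D'(\alpha_0))^2}{D(\alpha_0)},
\end{equation*}
so that $Q(\alpha_0)=\tfrac{G_0^2}{2}\sum_{\ell}\bra{2\log\ell+D'/D}^2\ell^{-2\alpha_0}(2\ell+1)C_\ell(0)$. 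Substitution into the Slutsky limit recovers the announced variance $\sigma^2(\theta)$. The substantive content of the argument is thus absorbed by the two ancillary lemmas; the main practical obstacle is the interior-point issue for the first-order condition, which is handled by consistency together with the implicit assumption that $\alpha_0$ is not at the boundary of $A$.
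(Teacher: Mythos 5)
Your proposal is correct and follows the same route as the paper: the paper's own proof is exactly the mean-value identity $\sqrt{N}(\widehat{\alpha}_N-\alpha_0)=-\sqrt{N}S_N(\alpha_0)/Q_N(\overline{\alpha}_N)$ combined with Lemmas \ref{lemma:gattoS} and \ref{lemma:gattoQ} via Slutsky's theorem. The algebraic identification $\sum_{\ell}\left(2\log \ell+D'(\alpha_0)/D(\alpha_0)\right)^2\ell^{-2\alpha_0}(2\ell+1)C_\ell(0)=D''(\alpha_0)-(D'(\alpha_0))^2/D(\alpha_0)$ that you verify at the end is precisely the content of the Remark following the theorem, so your write-up simply makes explicit the details the paper leaves implicit.
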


\begin{remark}
Note that, in $\sigma^2(\theta)$,
$$
 \sum_{\ell\in\naturals}  \left (2\log \ell + \frac{D'(\alpha_0)}{D(\alpha_0)} \right)^2 \ell^{-2\alpha_0}(2\ell+1) C_\ell(0) = D'' (\alpha_0 ) - \frac{\left (D'(\alpha_0) \right)^2}{D(\alpha_0)}.
$$
However, we prefer to keep the explicit notation to compare numerator and denominator of $\sigma^2(\theta)$.
\end{remark}

\begin{proof}[Proof of Theorem \ref{th:asn}]
	Using Lemmas \ref{lemma:gattoS} and \ref{lemma:gattoQ}, and using Slutski theorem yields the claimed result.
\end{proof}

\section{Proofs of the auxiliary results}\label{sec:salmazzi}
This section collects the proofs of the auxiliary results stated in Section \ref{sec:main}.
\begin{proof}[Proof of Lemma \ref{lemma:V}]
	Without loss of generality, take $\epsilon >0$ such that $\overline{B}_\epsilon \cap A$ is non-empty. Proving \eqref{eq:liminf} is equivalent to proving
	$$
	\lim_{N \to \infty} \inf_{\overline{B}_\epsilon \cap A} \frac{D_N(\alpha)}{D_N(\alpha_0)} \frac{U_N^2(\alpha_0)}{U_N^2(\alpha)} =  \delta_\epsilon,
	$$
	for some constant $\delta_\epsilon >1$.\\
	First, note that, for all $N>1$,
	\begin{align*}
		\frac{D_N(\alpha)}{D_N(\alpha_0)} \frac{U_N^2(\alpha_0)}{U_N^2(\alpha)}  &= \frac{\sum_\ell (2\ell+1) C_\ell(0) \ell^{-2\alpha}}{\sum_\ell (2\ell+1) C_\ell(0) \ell^{-2\alpha_0}} \left(  \frac{G_0\sum_\ell (2\ell+1) C_\ell(0) \ell^{-2\alpha_0}}{G_0\sum_\ell (2\ell+1) C_\ell(0) \ell^{-\alpha-\alpha_0}}  \right)^2\\
		&\\
		&= \frac{\sum_\ell (2\ell+1) C_\ell(0) \ell^{-2\alpha} \sum_\ell (2\ell+1) C_\ell(0) \ell^{-2\alpha_0}}{\left (\sum_\ell (2\ell+1) C_\ell(0) \ell^{-\alpha-\alpha_0}  \right)^2} \ge 1,
	\end{align*}
	by Cauchy-Schwartz inequality. In particular, equality holds if and only if $\alpha= \alpha_0$.
	Morever, for $\alpha > \alpha_0$, this quantity is monotone nondecreasing and, for $\alpha < \alpha_0$ is monotone nonincreasing. Indeed,
	\begin{align*}
		& \frac{d}{d\alpha} \left [ \frac{\sum_\ell (2\ell+1) C_\ell(0) \ell^{-2\alpha} \sum_\ell (2\ell+1) C_\ell(0) \ell^{-2\alpha_0}}{\left (\sum_\ell (2\ell+1) C_\ell(0) \ell^{-\alpha-\alpha_0}  \right)^2}  \right] 
		\\ & \quad \quad \quad \quad =\frac{\sum_\ell (2\ell+1) C_\ell(0) \ell^{-2\alpha_0}}{\left (\sum_\ell (2\ell+1) C_\ell(0) \ell^{-\alpha-\alpha_0}\right)^4} \Bigg [  -2\sum_\ell (2\ell+1) C_\ell(0) \ell^{-2\alpha} \log \ell \left( \sum_\ell (2\ell+1) C_\ell(0) \ell^{-\alpha-\alpha_0} \right)^2 \\&\quad \quad \quad \quad+ 2\sum_\ell (2\ell+1) C_\ell(0) \ell^{-2\alpha} \sum_\ell (2\ell+1) C_\ell(0) \ell^{-\alpha-\alpha_0}   \sum_\ell (2\ell+1) C_\ell(0) \ell^{-\alpha-\alpha_0}   \log \ell  \Bigg],
	\end{align*}
	and this is nonnegative if and only if
	\begin{align*}
		-\sum_\ell (2\ell+1) C_\ell(0) \ell^{-2\alpha} \log \ell  \sum_\ell (2\ell+1) C_\ell(0) \ell^{-\alpha-\alpha_0}  + \sum_\ell (2\ell+1) C_\ell(0) \ell^{-2\alpha}  \sum_\ell (2\ell+1) C_\ell(0) \ell^{-\alpha-\alpha_0}   \log \ell  \ge 0,
	\end{align*}
	that is
	\begin{align*}
		\frac{\sum_\ell (2\ell+1) C_\ell(0) \ell^{-\alpha-\alpha_0} \log \ell }{\sum_\ell (2\ell+1) C_\ell(0) \ell^{-\alpha-\alpha_0}}  \ge \frac{\sum_\ell (2\ell+1) C_\ell(0) \ell^{-2\alpha} \log \ell }{\sum_\ell (2\ell+1) C_\ell(0) \ell^{-2\alpha}} .
	\end{align*}
	Thus, we have two weighted sums with weights respectively $w_\ell = \frac{ (2\ell+1) C_\ell(0) \ell^{-\alpha-\alpha_0} }{\sum_\ell (2\ell+1) C_\ell(0) \ell^{-\alpha-\alpha_0}} $ and $w'_\ell =  \frac{(2\ell+1) C_\ell(0) \ell^{-2\alpha} }{\sum_\ell (2\ell+1) C_\ell(0) \ell^{-2\alpha}} $, $\sum_\ell w_\ell = \sum_\ell w'_\ell = 1$. 
	
	If $\alpha > \alpha_0$, for all $\ell_1 \le \ell_2$, it holds that $w'_{\ell_1}/ w'_{\ell_2} \ge w_{\ell_1}/ w_{\ell_2}$, which implies, by \cite[Theorem 2]{weightedsum},
	$$
	\sum_\ell w_\ell \log \ell \ge \sum_\ell w'_\ell \log \ell.
	$$
	The case $\alpha < \alpha_0$ follows similar arguments.
Hence, since $\overline{B}_\epsilon \cap A$ is compact, there exists $b_{\epsilon,1}, b_{\epsilon,2}\in \overline{B}_\epsilon \cap A$, such that
	$$
	\inf_{\overline{B}_\epsilon \cap A} \frac{D_N(\alpha)}{D_N(\alpha_0)} \frac{U^2_N(\alpha_0)}{U^2_N(\alpha)} =  \min\left  \{ \frac{D_N(b_{\epsilon,1})}{D_N(\alpha_0)} \frac{U^2_N(\alpha_0)}{U^2_N(b_{\epsilon,1})} , \frac{D_N(b_{\epsilon,2})}{D_N(\alpha_0)} \frac{U^2_N(\alpha_0)}{U^2_N(b_{\epsilon,2})}  \right\}.
	$$
	Moreover, since it must be $b_{\epsilon,1}, b_{\epsilon,2} \ne \alpha_0$,
		\begin{align*}
		\lim_{N\to \infty } \inf_{\overline{B}_\epsilon \cap A} \frac{D_N(\alpha)}{D_N(\alpha_0)} \frac{U^2_N(\alpha_0)}{U^2_N(\alpha)}  &= \lim_{N\to \infty } \min\left  \{ \frac{D_N(b_{\epsilon,1})}{D_N(\alpha_0)} \frac{U^2_N(\alpha_0)}{U^2_N(b_{\epsilon,1})} , \frac{D_N(b_{\epsilon,2})}{D_N(\alpha_0)} \frac{U^2_N(\alpha_0)}{U^2_N(b_{\epsilon,2})}  \right\} \\&=  \min\left  \{  \lim_{N\to \infty } \frac{D_N(b_{\epsilon,1})}{D_N(\alpha_0)} \frac{U^2_N(\alpha_0)}{U^2_N(b_{\epsilon,1})} ,  \lim_{N\to \infty } \frac{D_N(b_{\epsilon,2})}{D_N(\alpha_0)} \frac{U^2_N(\alpha_0)}{U^2_N(b_{\epsilon,2})}  \right\} \\&= \delta_\epsilon >1,
	\end{align*}
		which gives the claimed result.
\end{proof}

\begin{proof}[Proof of Lemma \ref{lemma:T}]
	Note that, since $A = [a_1, a_2]$,
	\begin{align*}
		\sup_A \left | \frac{\widehat{U}_{N} (\alpha) - U_{N}(\alpha)}{U_{N}(\alpha)} \right |&= \sup_A \left |\frac{\sum_\ell \ell^{-\alpha} ( 2\ell+1) C_\ell(1) \left[ \frac{\widehat{C}_{\ell;N}(1)}{C_\ell(1)} - 1 \right]}{\sum_\ell \ell^{-\alpha} (2\ell+1) C_{\ell}(1)}  \right | \\
		&\le \frac{\sup_A \left | \sum_\ell \ell^{-\alpha} ( 2\ell+1) C_\ell(1) \left[ \frac{\widehat{C}_{\ell;N}(1)}{C_\ell(1)} - 1 \right] \right |}{\inf_A \left | \sum_\ell \ell^{-\alpha} (2\ell+1) C_{\ell}(1)  \right |} \\
		&\le \frac{ \sum_\ell \ell^{-a_1-\alpha_0} ( 2\ell+1) C_\ell(0) \left | \frac{\widehat{C}_{\ell;N}(1)}{C_\ell(1)} - 1 \right| }{ \sum_\ell \ell^{-a_2-\alpha_0} (2\ell+1) C_{\ell}(0) } .
	\end{align*}
	Moreover, from \cite[Lemma 2 (Supplementary material)]{cm19},
	$$
	\Ex \left | \frac{\widehat{C}_{\ell;N}(1)}{C_\ell(1)} - 1 \right| \le \frac{c}{\sqrt{(2\ell+1)N}},
	$$
	where $c>0$. 
	Hence, we have that 
	$$
	\Ex \left [ \sup_A \left | \frac{\widehat{U}_{N} (\alpha) - U_{N}(\alpha)}{U_{N}(\alpha)} \right |   \right] = O \left( \frac{1}{\sqrt{N}} \right).
	$$
	The proof for the $ \Ex \left [\sup_A \left | \frac{\widehat{D}_{N} (\alpha) - D_{N}(\alpha)}{D_{N}(\alpha)} \right | \right]$ follows the same lines. 
\end{proof}
\begin{proof}[Proof of Lemma \ref{Ghat0}]
	Consider the quantity
	\begin{align*}
		\Ex \left | \frac{\sum_\ell \ell^{-\alpha_0} ( 2\ell+1) \widehat{C}_{\ell;N}(1)}{\sum_\ell \ell^{-2\alpha_0} (2\ell+1) C_{\ell}(0)} - G_0 \right |&= \Ex \left |\frac{\sum_\ell \ell^{-\alpha_0} ( 2\ell+1) C_\ell(0) \left[ \frac{\widehat{C}_{\ell;N}(1)}{C_\ell(0)} - G_0 \ell^{-\alpha_0}\right]}{\sum_\ell \ell^{-2\alpha_0} (2\ell+1) C_{\ell}(0)}  \right | \\
		&\le \frac{\sum_\ell \ell^{-\alpha_0} ( 2\ell+1) C_\ell(0)\Ex\left| \frac{\widehat{C}_{\ell;N}(1)}{C_\ell(0)} - G_0 \ell^{-\alpha_0}\right|}{\sum_\ell \ell^{-2\alpha_0} (2\ell+1) C_{\ell}(0)} \\
		& \le \frac{1}{\sqrt{N}} \frac{\sum_\ell \ell^{-\alpha_0} \sqrt{ 2\ell+1} C_\ell(0)}{\sum_\ell \ell^{-2\alpha_0} (2\ell+1) C_{\ell}(0)} \\
		&\le \frac{c^\prime}{\sqrt{N}},
	\end{align*}
	where $c^\prime>0$, from \cite[Lemma 2 (Supplementary material)]{cm19}. Moreover,
	\begin{align*}
		\Ex \left |\sum_\ell \ell^{-2\alpha_0} (2\ell+1) \widehat{C}_{\ell;N}(0) - \sum_\ell \ell^{-2\alpha_0} (2\ell+1) C_{\ell}(0) \right | \le \sum_\ell \ell^{-2\alpha_0} (2\ell+1)C_\ell(0)  \Ex \left| \frac{\widehat{C}_{\ell;N}(0)}{C_\ell(0)} -1 \right |
		& \le \frac{c^{\prime\prime}}{\sqrt{N}},
	\end{align*}
	where $c^{\prime\prime}>0$, from \cite[Lemma 1 (Supplementary material)]{cm19}.
\end{proof}

\begin{proof}[Proof of Lemma \ref{lemma:loglog}]
	For $\epsilon > 0$, set $B_\epsilon=\{\alpha: |\alpha -\alpha_0| < \epsilon\}$ and $M_{\epsilon}=\{\alpha: \log L_N |\alpha -\alpha_0| < \epsilon\}$, and observe that $\overline{B}_\epsilon \subset \overline{M}_\epsilon$. Then,
	\begin{align*}
		\Pr \left( \log L_N | \widehat{\alpha}_N - \alpha _{0} | \geq \epsilon \right) &= \Pr \left( \widehat{\alpha}_N \in \overline{M}_\epsilon \cap A \right)\le \Pr \left( \widehat{\alpha}_N \in B_\epsilon \cap \overline{M}_\epsilon \cap A \right) + \Pr \left( \widehat{\alpha}_N \in \overline{B}_\epsilon \cap A \right).
	\end{align*}
	For the first term on the right hand side, we have
	\begin{align*}
		\Pr \left( \widehat{\alpha}_N \in B_\epsilon \cap \overline{M}_\epsilon \cap A \right) &\le \Pr  \left ( \sup_{B_\epsilon \cap \overline{M}_\epsilon \cap A } \widetilde{R}_{N} (\alpha) \ge  \widetilde{R}_{N} (\alpha_0) \right) \\
		&=   \Pr  \left ( \sup_{B_\epsilon \cap \overline{M}_\epsilon \cap A } \Delta \widetilde R_{N} (\alpha, \alpha_0) \ge  0 \right)\\
		&\le \Pr  \left ( \sup_{ A}  |T_{N} (\alpha, \alpha_0)| \ge  \inf_{B_\epsilon \cap \overline{M}_\epsilon \cap A } V_{N}(\alpha, \alpha_0)   \right).
	\end{align*}
	Observe also that $B_\epsilon \cap \overline{M}_\epsilon=\{\alpha: \epsilon/ \log L_N \le |\alpha -\alpha_0| < \epsilon \}$. We have proved in Lemma \ref{lemma:V} that the quantity $V_{N}(\alpha, \alpha_0)$ is monotone nondecreasing for $\alpha > \alpha_0$ and monotone nonincreasing for $\alpha < \alpha_0$. Hence, for $N$ sufficiently large, the infimum over $B_\epsilon \cap \overline{M}_\epsilon \cap A$ is reached at $m_{\epsilon,1}=\alpha_0 - \frac{\epsilon}{\log L_N}$ or at $m_{\epsilon,2}=\alpha_0 + \frac{\epsilon}{\log L_N}$. 	For the sake simplicity, we will use $\epsilon_N$ to indicate $\frac{\epsilon}{\log L_N}$.\\
	Let us first consider 
	$$
	V_{N}(m_{\epsilon,1}, \alpha_0) = \log \left (  \frac{\sum_\ell (2\ell+1) C_\ell(0) \ell^{-2\alpha_0 + 2\epsilon_N } \sum_\ell (2\ell+1) C_\ell(0) \ell^{-2\alpha_0}}{\left (\sum_\ell (2\ell+1) C_\ell(0) \ell^{-2\alpha_0 + \epsilon_N}  \right)^2} -1 +1 \right),
	$$
	and note that
	\begin{align*}
		&\frac{\sum_\ell (2\ell+1) C_\ell(0) \ell^{-2\alpha_0 + 2\epsilon_N } \sum_\ell (2\ell+1) C_\ell(0) \ell^{-2\alpha_0}}{\left (\sum_\ell (2\ell+1) C_\ell(0) \ell^{-2\alpha_0 + \epsilon_N}  \right)^2} -1  \\=&  \frac{\sum_\ell (2\ell+1) C_\ell(0) \ell^{-2\alpha_0 + 2\epsilon_N } \sum_\ell (2\ell+1) C_\ell(0) \ell^{-2\alpha_0} - \left (\sum_\ell (2\ell+1) C_\ell(0) \ell^{-2\alpha_0 + \epsilon_N}  \right)^2}{\left (\sum_\ell (2\ell+1) C_\ell(0) \ell^{-2\alpha_0 + \epsilon_N}  \right)^2} .
	\end{align*}
	For generic vectors $x,y \in \mathbb{R}^d$, $d \in \mathbb{N}$, with standard inner product and norm, we have the following equality
	\begin{align*}
		\frac{4 \|x\|^2 \|y\|^2 - 4\langle x, y \rangle^2}{4\langle x, y \rangle^2} = \frac{\|x-y\|^2\|x+y\|^2 - (\|x\|^2 - \|y\|^2)^2}{4\langle x, y \rangle^2}. 
	\end{align*}
	Hence, if
	$$
	x = (\sqrt{(2\ell+1)C_{\ell}(0)} \ell^{-\alpha_0 + \epsilon_N}, \ \ell = 1,\dots,L_N)^{\texttt T}, \qquad y = (\sqrt{(2\ell+1)C_{\ell}(0)}\ell^{-\alpha_0}, \ \ell = 1,\dots,L_N)^{\texttt T},
	$$
	we have
	\begin{align*}
		& 4\sum_\ell (2\ell+1) C_\ell(0) \ell^{-2\alpha_0 + 2\epsilon_N } \sum_\ell (2\ell+1) C_\ell(0) \ell^{-2\alpha_0} - 4\left (\sum_\ell (2\ell+1) C_\ell(0) \ell^{-2\alpha_0 + \epsilon_N}  \right)^2\\
		=& \sum_\ell (2\ell+1) C_\ell(0) \ell^{-2\alpha_0 } \left (\ell^{\epsilon_N}  - 1 \right)^2  \sum_\ell (2\ell+1) C_\ell(0) \ell^{-2\alpha_0} \left (\ell^{\epsilon_N}  + 1\right )^2  - \left (\sum_\ell (2\ell+1) C_\ell(0) \ell^{-2\alpha_0 }\left  (\ell^{2\epsilon_N}  - 1\right )   \right)^2.
	\end{align*}
	For the sake of simplicity, write $a_{\ell} = (2\ell+1)C_\ell(0) \ell^{-2 \alpha_0}$, then
	\begin{align*}
		&\sum_\ell (2\ell+1) C_\ell(0) \ell^{-2\alpha_0 } \left (\ell^{\epsilon_N}  - 1 \right)^2  \sum_\ell (2\ell+1) C_\ell(0) \ell^{-2\alpha_0} \left (\ell^{\epsilon_N}  + 1\right )^2  - \left (\sum_\ell (2\ell+1) C_\ell(0) \ell^{-2\alpha_0 }\left  (\ell^{2\epsilon_N}  - 1\right )   \right)^2\\
		=& \sum_{\ell_1,\ell_2} a_{\ell_1}a_{\ell_2} \left [\left (\ell_1^{\epsilon_N}  - 1 \right)^2 \left (\ell_2^{\epsilon_N}  + 1 \right)^2 - \left (\ell_1^{2\epsilon_N}  - 1 \right) \left (\ell_2^{2\epsilon_N}  - 1 \right)  \right]\\
		=& 2\sum_{\ell_1,\ell_2}  a_{\ell_1}a_{\ell_2} \ell_1^{2\epsilon_N}  + 2\sum_{\ell_1,\ell_2}  a_{\ell_1}a_{\ell_2} \ell_2^{2\epsilon_N} -2\sum_{\ell_1,\ell_2}  a_{\ell_1}a_{\ell_2} \ell_1^{\epsilon_N}  + 2\sum_{\ell_1,\ell_2}  a_{\ell_1}a_{\ell_2} \ell_2^{\epsilon_N} \\
		-&2\sum_{\ell_1,\ell_2}  a_{\ell_1}a_{\ell_2}\ell_1^{\epsilon_N}\ell_2^{2\epsilon_N}   + 2\sum_{\ell_1,\ell_2}  a_{\ell_1}a_{\ell_2} \ell_1^{2\epsilon_N}\ell_2^{\epsilon_N} - 4\sum_{\ell_1,\ell_2}  a_{\ell_1}a_{\ell_2} \ell_1^{\epsilon_N}\ell_2^{\epsilon_N}\\
		=& 2\sum_{\ell_1,\ell_2}  a_{\ell_1}a_{\ell_2}  (\ell_1^{\epsilon_N}  - \ell_2^{\epsilon_N} )^2,
	\end{align*}
	since
	\begin{align*}
		\left (\ell_1^{\epsilon_N}  - 1 \right)^2 \left (\ell_2^{\epsilon_N}  + 1 \right)^2 - \left (\ell_1^{2\epsilon_N}  - 1 \right) \left (\ell_2^{2\epsilon_N}  - 1 \right)  &= 2\ell_1^{2\epsilon_N} + 2 \ell_2^{2\epsilon_N} - 2 \ell_1^{\epsilon_N} + 2 \ell_2^{\epsilon_N}\\
		&-2\ell_1^{\epsilon_N}\ell_2^{2\epsilon_N} + 2 \ell_1^{2\epsilon_N}\ell_2^{\epsilon_N} - 4 \ell_1^{\epsilon_N}\ell_2^{\epsilon_N} .
	\end{align*}
	Without loss of generality, consider $\ell_1 < \ell_2$, thus
	$$
	(\ell_1^{\epsilon_N}  - \ell_2^{\epsilon_N} )^2 = \ell_1^{2\epsilon_N} \left (\left (\frac{\ell_2}{\ell_1}\right )^{\epsilon_N}  - 1 \right)^2 \ge \epsilon_N^2 (\log \ell_2 - \log \ell_1)^2,
	$$
	where we used the inequality $e^x - 1\ge x$, $x \ge 0$.
	This implies
	$$
	\frac{\sum_\ell (2\ell+1) C_\ell(0) \ell^{-2\alpha_0 + 2\epsilon_N } \sum_\ell (2\ell+1) C_\ell(0) \ell^{-2\alpha_0}}{\left (\sum_\ell (2\ell+1) C_\ell(0) \ell^{-2\alpha_0 + \epsilon_N}  \right)^2} \ge 1 + \frac{c_\epsilon }{ (\log L_N)^2},
	$$
	with $c_\epsilon > 0$. Now, since $$
	\log( 1 + x ) \ge \frac{x}{1+x}, \qquad x > -1,
	$$
	we have
	$$
	V_{N}(m_{\epsilon,1}, \alpha_0) \ge \frac{c_\epsilon}{1+ c_\epsilon} (\log L_N)^{-2}.
	$$
	A similar argument holds for $m_{\epsilon,2}.$
Then, from Equation \eqref{eq:trapizzino} and Lemma \ref{lemma:T},
$$
Pr  \left ( \sup_{ A}  |T_{N} (\alpha, \alpha_0)| \ge  \inf_{B_\epsilon \cap \overline{M}_\epsilon \cap A } V_{N}(\alpha, \alpha_0)   \right) = O \left ( \frac{(\log L_N)^2 }{\sqrt{N}} \right).
$$
Using Condition \ref{cond:tronc}, we obtain the claimed result. 
\end{proof}

\begin{proof}[Proof of Lemma \ref{lemma:gattoS} ]
	First, observe that
	\begin{align*}
		\frac{d R_{N} (G^* , \alpha )}{d \alpha}
		&= \frac{d}{d \alpha } \left ( - \frac{\widehat{U}_N^2}{\widehat{D}_N} \right) \\
		&= \frac{-2 \widehat{U}_N \widehat{U}_N' \widehat{D}_N + \widehat{U}_N^2 \widehat{D}_N'}{\widehat{D}_N^2} \\
		&= \frac{\widehat{U}_N}{\widehat{D}_N} \left ( -2 \widehat{U}_N' + \frac{\widehat{U}_N}{\widehat{D}_N} \widehat{D}_N' \right ).
	\end{align*}
Then, recall from Lemma \ref{Ghat0} that $\frac{\widehat{U}_N(\alpha_0)}{\widehat{D}_N(\alpha_0)} \to G_0$.
Now, we have that
	\begin{equation*}
		-2\widehat{U}_N'(\alpha_0) = 2  \sum_\ell \log \ell \, \ell^{-\alpha_0} (2\ell+1)\widehat{C}_{\ell;N}(1), 
	\end{equation*}
while
	\begin{equation*}
		\widehat{D}_N'(\alpha_0) = -2  \sum_\ell \log \ell \, \ell^{-2\alpha_0}(2\ell+1) \widehat{C}_{\ell;N}(0). 
	\end{equation*}
	The following upper bound thus holds
	\begin{align*}
		&\Ex \left |\sum_\ell \log \ell \, \ell^{-2\alpha_0} (2\ell+1) \widehat{C}_{\ell;N}(0) - \sum_\ell  \log \ell \,  \ell^{-2\alpha_0} (2\ell+1) C_{\ell}(0) \right | \\
		& \quad\quad\quad\quad \quad\le \sum_\ell \log \ell \,  \ell^{-2\alpha_0} (2\ell+1)C_\ell(0)  \Ex \left| \frac{\widehat{C}_{\ell;N}(0)}{C_\ell(0)} -1 \right |
		 \le \frac{c}{\sqrt{N}},
	\end{align*}
where $c>0$.
	%
	%
	%
	In addition,
	\begin{align*}
2 \sum_\ell \log \ell \, \ell^{-\alpha_0} (2\ell+1) \widehat{C}_{\ell;N}(1) +G_0\widehat{D}_N' &= 2 \left (\sum_\ell \log \ell \, \ell^{-\alpha_0} (2\ell+1) \widehat{C}_{\ell;N}(1)  - \sum_\ell \log \ell \, \ell^{-2\alpha_0}(2\ell+1) \widehat{C}_{\ell;N}(0) G_0 \right)\\
		&= 2 \sum_\ell \log \ell \, \ell^{-\alpha_0} (2\ell+1)
		\left(\widehat{C}_{\ell;N}(1) - G_0 \ell^{-\alpha_0 } \widehat{C}_{\ell;N}(0) \right)\\
		&= \frac{2}{\sqrt{N}} \sum_\ell \log \ell \, \ell^{-\alpha_0} \sqrt{2\ell+1}C_\ell(0)B_{\ell;N},
	\end{align*}
	where
$$
B_{\ell,N} = \frac{1}{C_{\ell}(0)\sqrt{N(2\ell+1)}} {\sum_{t=1}^N \sum_{m=-\ell}^\ell a_{\ell,m} (t-1) a_{\ell,m;Z} (t)}.
$$
	Recall that, for fixed $\ell \in \naturals$, $B_{\ell;N}$ belongs to the second order Wiener chaos and
	$$
	B_{\ell;N}  \overset{d}{\to} Z_\ell \sim  \mathcal{N}\left(0, \frac{C_{\ell;Z}}{C_\ell(0)} \right), \qquad N \to \infty;
	$$
	see \cite{noupebook} and also \cite{cm19}, for definitions and proofs.
	Moreover, we have that 	
	\begin{align*}
		\frac{\widehat{U}_N}{\widehat{D}_N} \widehat{D}_N' - G_0\widehat{D}_N' &=\frac{\widehat{D}_N'}{\widehat{D}_N}  \left (\sum_\ell  \ell^{-\alpha_0} (2\ell+1) \widehat{C}_{\ell;N}(1)  - \sum_\ell  \ell^{-2\alpha_0}(2\ell+1) \widehat{C}_{\ell;N}(0) G_0 \right)\\
		&= \frac{\widehat{D}_N'}{\widehat{D}_N}  \sum_\ell  \ell^{-\alpha_0} (2\ell+1)
		\left(\widehat{C}_{\ell;N}(1) - G_0 \ell^{-\alpha_0 } \widehat{C}_{\ell;N}(0) \right)\\
		&= \frac{1}{\sqrt{N}} \frac{\widehat{D}_N'}{\widehat{D}_N} \sum_\ell  \ell^{-\alpha_0} \sqrt{2\ell+1}C_\ell(0)B_{\ell;N}.
	\end{align*}
	Hence, we can define
	\begin{equation*}
A_N=		\sqrt{N} \left ( -2 \widehat{U}_N' + \frac{\widehat{U}_N}{\widehat{D}_N} \widehat{D}_N' \right ) = \sum_\ell  \ell^{-\alpha_0} \sqrt{2\ell+1}C_\ell(0)B_{\ell;N} \left (2\log \ell + \frac{\widehat{D}_N'}{\widehat{D}_N} \right)
	\end{equation*}
	while
	\begin{equation*}
		C_{N} = \sum_\ell  \ell^{-\alpha_0} \sqrt{2\ell+1}C_\ell(0)B_{\ell;N} \left (2\log \ell + \frac{D'}{D} \right),
	\end{equation*}
	where
	\begin{equation*}
		\frac{D'(\alpha_0)}{D(\alpha_0)} = - 2 \frac{ \sum_\ell \log \ell \, \ell^{-2\alpha_0}(2\ell+1) C_\ell(0) }{\sum_\ell  \ell^{-2\alpha_0}(2\ell+1) C_\ell(0)} < \infty.
	\end{equation*}
Thus, we first prove that $A_{N} - C_{N} \to 0$ in probability, as $N \to \infty$. Consider
	\begin{equation*}
		A_{N} - C_{N} = \left (\frac{\widehat{D}_N'}{\widehat{D}_N} -  \frac{D'}{D} \right)  \sum_\ell  \ell^{-\alpha_0} \sqrt{2\ell+1}C_\ell(0)B_{\ell;N}.
	\end{equation*}
	It is clear that
	\begin{align*}
		\left (\frac{\widehat{D}_N'}{\widehat{D}_N} -  \frac{D'}{D} \right) \overset{p}{\to} 0;
	\end{align*}
	moreover,
	\begin{align}\label{eq:AN-CN}
		\sum_\ell  \ell^{-\alpha_0} \sqrt{2\ell+1}C_\ell(0)B_{\ell;N} \overset{d}{\to} \mathcal{N}\left (0, 	\sum_\ell \ell^{-2\alpha_0}(2\ell+1) C_\ell(0)C_{\ell;Z}   \right).
	\end{align}
Indeed, from \cite{cm19}, we have that	
	\begin{align*}
	&\mathbb{E} \left[ \sum_\ell  \ell^{-\alpha_0} \sqrt{2\ell+1}C_\ell(0)B_{\ell;N} \right] = 0,\\
		&\operatorname{Var} \left [ \sum_\ell  \ell^{-\alpha_0} \sqrt{2\ell+1}C_\ell(0)B_{\ell;N} \right]  =  \sum_\ell  \ell^{-2\alpha_0} (2\ell+1) C_\ell(0)C_{\ell;Z},
	\end{align*}
	which is convergent, and
	\begin{align*}
		\operatorname{Cum}_4 \left [ \sum_\ell  \ell^{-\alpha_0} \sqrt{2\ell+1}C_\ell(0)B_{\ell;N} \right]  &=  \sum_\ell  \ell^{-4\alpha_0} (2\ell+1)^2 C^4_\ell(0)\operatorname{Cum}_4[B_{\ell;N} ]\\
		&= \frac{6}{N}  \sum_\ell  \ell^{-4\alpha_0} (2\ell+1)^2 C^2_\ell(0) C^2_{\ell;Z},
	\end{align*}
	which goes to 0 as $N \to \infty$. Thus, by the Fourth Moment Theorem \cite[Theorem 5.2.7]{noupebook}, we obtain \eqref{eq:AN-CN}.
Finally, as far as the convergence in distribution for $C_{N}$ is concerned, it holds that
	\begin{equation*}
		C_{N} \overset{d}{\to} \mathcal{N}\left (0, 	\sum_\ell  \left (2\log \ell + \frac{D'}{D} \right)^2 \ell^{-2\alpha_0}(2\ell+1) C_\ell(0)C_{\ell;Z}   \right),
	\end{equation*}
where the proof is similar to the previous one. \\	
As a consequence of Slutsky theorem, we conclude that
	\begin{equation*}
		\sqrt{N} S_{N}(\alpha_0) \overset{d}{\to}  \mathcal{N}\left (0, 	G_0^2\sum_\ell  \left (2\log \ell + \frac{D'}{D} \right)^2 \ell^{-2\alpha_0}(2\ell+1) C_\ell(0)C_{\ell;Z}   \right).
	\end{equation*}
\end{proof}

\begin{proof}[Proof of Lemma \ref{lemma:gattoQ}]
	We first need to prove that $Q(\alpha_0) \ne 0$. 
	If we denote with $a_\ell = (2\ell+1)C_\ell(0)\ell^{-2\alpha_0}$, we can write
	\begin{align*}
		Q(\alpha_0) &= \frac{G_0^2}{D^3}   \sum_{\ell_1, \ell_2, \ell_3, \ell_4} \frac{a_{\ell_1}}{1-\phi_{\ell_1}^2}  \frac{a_{\ell_2}}{1-\phi_{\ell_2}^2}  \frac{a_{\ell_3}}{1-\phi_{\ell_3}^2}  \frac{a_{\ell_4}}{1-\phi_{\ell_4}^2}  \left (2\log ^2 \ell_1 + 2 \log \ell_1 \log \ell_2 -8 \log \ell_1 \log \ell_2 + 8 \log \ell_1 \log \ell_2 - 4 \log ^2 \ell_1 \right ) \\
		& =   \frac{G_0^2}{D^3}   \sum_{\ell_1, \ell_2, \ell_3, \ell_4} \frac{a_{\ell_1}}{1-\phi_{\ell_1}^2}  \frac{a_{\ell_2}}{1-\phi_{\ell_2}^2}  \frac{a_{\ell_3}}{1-\phi_{\ell_3}^2}  \frac{a_{\ell_4}}{1-\phi_{\ell_4}^2}  \left (- 2\log ^2 \ell_1 + 2 \log \ell_1 \log \ell_2  \right ) \\
		& = - 2 \frac{G_0^2}{D^3}   \sum_{\ell_1 >\ell_2, \ell_3,  \ell_4} \frac{a_{\ell_1}}{1-\phi_{\ell_1}^2}  \frac{a_{\ell_2}}{1-\phi_{\ell_2}^2}  \frac{a_{\ell_3}}{1-\phi_{\ell_3}^2}  \frac{a_{\ell_4}}{1-\phi_{\ell_4}^2}  \left (\log \ell_1 - \log \ell_2  \right )^2 < 0.
	\end{align*}
	A simplified expression is then given by
	$$
	Q(\alpha_0) = \frac{G_0^2}{2} \left ( D''(\alpha_0) -  \frac{\left (D'(\alpha_0) \right)^2}{D(\alpha_0)} \right).
	$$
	The rest of the proof follows the arguments of Theorem \ref{th:cons}, by proving separately the convergence of each term in $Q_N(\overline{\alpha}_N)$ and then applying Slutsky theorem.
	Note that $|\overline{\alpha}_N - \alpha_0|=o_p(1)$ and $\log L_N |\overline{\alpha}_N - \alpha_0|=o_p(1)$. Indeed, $|\overline{\alpha}_N - \alpha_0| \le |\widehat{\alpha}_N - \alpha_0|$, thus for $\epsilon > 0$
	$$
	\Pr \left( \left | \overline{\alpha}_N - \alpha _{0} \right |\geq \epsilon \right) \le \Pr \left(  \left | \widehat{\alpha}_N - \alpha _{0} \right | \geq \epsilon \right),
	$$
	and
	$$
	\Pr \left( \log L_N \left | \overline{\alpha}_N - \alpha _{0} \right | \geq \epsilon \right) \le \Pr \left( \log L_N \left |\widehat{\alpha}_N - \alpha _{0} \right |\geq \epsilon \right).
	$$
	We report here for completeness the proof for the term $D''(\overline{\alpha}_N)$.
	We have that
	\begin{align*}
	\widehat{D}''_{N} (\overline{\alpha}_N) - D''_N (\alpha_0)&=	\sum_\ell \log^2 \ell\, \ell^{-2\overline{\alpha}_N} (2\ell+1) \widehat{C}_{\ell;N}(0)-\sum_\ell \log^2 \ell \, \ell^{-2{\alpha}_0} (2\ell+1){C}_{\ell}(0)\\ &=  \sum_\ell  \log^2 \ell \, \ell^{-2(\overline{\alpha}_N-\alpha_0)} \ell^{-2\alpha_0} (2\ell+1) \widehat{C}_{\ell;N}(0)-\sum_\ell  \log^2 \ell \, \ell^{-2{\alpha}_0} (2\ell+1){C}_{\ell}(0)  \\
		&\pm  \sum_\ell  \log^2 \ell \, \ell^{-2(\overline{\alpha}_N-\alpha_0)} \ell^{-2\alpha_0} (2\ell+1) C_{\ell}(0)\\
		&= \sum_\ell   \log^2 \ell \,\ell^{-2\left(\overline{\alpha}_N-\alpha_0\right)} \ell^{-2\alpha_0}(2\ell+1)\left( \widehat{C}_{\ell;N}(0)-{C}_{\ell}(0)\right)\\&+\sum_\ell  \log^2 \ell \, \left(\ell^{-2\left(\overline{\alpha}_N-\alpha_0\right)}-1 \right)\ell^{-2\alpha_0}(2\ell+1) {C}_{\ell}(0) \\
		&=D''_1 + D''_2.
	\end{align*}
	Thus, it holds that
	\begin{equation*}
		\Pr\left(\left\vert \widehat{D}''_{N} (\overline{\alpha}_N) - D''_{N} (\alpha_0) \right \vert \ge 2\varepsilon\right) \leq \Pr(|D''_1| \ge \varepsilon) +  \Pr(|D''_2| \ge \varepsilon).
	\end{equation*}
	For the first term, we choose a constant $\delta > 4$, and we obtain
	\begin{align*}
		\Pr(|D''_1| \ge \varepsilon)  &\le  \Pr\left (|D''_1| \ge \varepsilon \,  \cap \, \left \vert \overline{\alpha}_{N}-\alpha_0\right\vert < \frac{1}{\delta} \right) + \Pr\left (\left \vert \overline{\alpha}_{N}-\alpha_0\right\vert\ge \frac{1}{\delta} \right)\\
		&\le \Pr \left (  \sum_\ell  \log^2 \ell \, \ell^{2/\delta} \ell^{-2\alpha_0}(2\ell+1) C_\ell(0) \left | \frac{\widehat{C}_{\ell;N}(0)}{C_\ell(0)}-1\right| \ge   \varepsilon \right)+ o(1) \\
		& \le \frac{c}{\epsilon}  \sum_\ell  \log^2 \ell \, \ell^{2/\delta} \ell^{-2\alpha_0}(2\ell+1) C_\ell(0) \frac{1}{\sqrt{(2\ell+1)N}} + o(1) = o(1),
	\end{align*}
where $c>0$. \\On the other hand, for a suitably small $%
\delta >0$,%
\begin{eqnarray*}
\Pr \left( \left\vert D''_2\right\vert \geq \varepsilon \right)
&=&\Pr \left( \left[ \left\vert D''_2\right\vert \geq \varepsilon%
\right] \cap \left[ \log L_N\left| \alpha _{0}-\overline{\alpha}_{N}\right| %
\right] <\delta \right) +\Pr \left( \log L_N\left| \alpha _{0}-\overline{\alpha 
}_{N}\right| \geq \delta \right) \\
&=&\Pr \left( \left[ \left\vert D''_2\right\vert \geq \varepsilon%
\right] \cap \left[ \log L_N\left| \alpha _{0}-\overline{\alpha}_{N}\right| %
\right] <\delta \right) +o(1),
\end{eqnarray*}%
and using $\left\vert e^{-x}-1\right\vert \leq 2|x|$ for $|x|\le1,$ we
obtain%
\begin{equation*}
\left\vert \ell^{-2\left( \alpha _{0}-\overline{\alpha}_{N}\right)
}-1\right\vert =\left\vert \exp (-2\log \ell\left( \alpha _{0}-\overline{\alpha}%
_{N})\right) -1\right\vert \leq 4\log \ell\left\vert \alpha _{0}-\overline{\alpha}%
_{N}\right\vert ;
\end{equation*}
hence,
\begin{align*}
&\Pr \left( \left[ \left\vert D''_2\right\vert \geq \frac{\varepsilon }{2}%
\right] \cap \left[ \log L_N \left| \alpha _{0}-\overline{\alpha}_{N}\right |%
\right] <\delta \right)\\
&\leq \Pr \left( \sum_\ell \log^2 \ell \, \left | \ell^{-2\left(\overline{\alpha}-\alpha_0\right)}-1 \right|\ell^{-2\alpha_0}(2\ell+1) {C}_{\ell}(0)  \geq \varepsilon\cap \left[ \log L_N\left|
\alpha _{0}-\overline{\alpha}_{N}\right| \right] <\delta \right) \\
&\leq \Pr \left(4  \log L_N \left\vert \alpha _{0}-\overline{\alpha}%
_{N} \right\vert  \sum_\ell  \log^2 \ell \, \ell^{-2\alpha_0}(2\ell+1) {C}_{\ell}(0)  \geq \varepsilon  \right) = o(1).
\end{align*}%
\end{proof}

\section*{Acknowledgement} 
The authors wish to thank Domenico Marinucci for many insightful discussions and suggestions.

\bibliography{Bibliography}

\end{document}